\documentclass[a4wide,12pt]{article}

\usepackage{lipsum}
\usepackage{amsfonts}
\usepackage{graphicx}
\usepackage{epstopdf}
\usepackage{stmaryrd}
\usepackage{cite}
\ifpdf
  \DeclareGraphicsExtensions{.eps,.pdf,.png,.jpg}
\else
  \DeclareGraphicsExtensions{.eps}
\fi

\usepackage{calc}
\usepackage{microtype}
\usepackage{amsmath}
\usepackage{amssymb}
\usepackage{amsthm}
\usepackage{color}

\usepackage{caption}
\usepackage{subcaption}
\usepackage{graphicx}
\usepackage[hidelinks]{hyperref}
\hypersetup{
    colorlinks=true,
    linkcolor=blue,
    filecolor=blue,      
    urlcolor=blue,
    citecolor = blue,
    }
\usepackage{tikz}
\usetikzlibrary{plotmarks}
\usepackage{pgfplots}
\usepackage{dsfont}
\usepackage{bm}
\usepackage{listings}
\usepackage{cleveref}
\usepackage{comment}
\usepackage{cite}
\usepackage{mathrsfs,graphicx,color,float, indentfirst,textcomp}
\usepackage{setspace}
\usepackage{latexsym,lscape,amsfonts}
\usepackage{enumerate}
\usepackage[shortlabels]{enumitem}
\usepackage{lscape}
\usepackage{multirow} 
\usepackage{xcolor}
\usepackage{geometry}
\usepackage{mathtools}
\usepackage[normalem]{ulem}
\usepackage[algoruled,boxed,lined]{algorithm2e}

\theoremstyle{definition}
\numberwithin{equation}{section}
\newtheorem{theorem}{Theorem}[section]
\newtheorem{remark}[theorem]{Remark}

\newtheorem{lemma}{Lemma}[section]

\newtheorem{proposition}[theorem]{Proposition}

\title{A fast spectral particle method for the Landau equation}

\author{Giacomo Borghi\thanks{Corresponding author.}$\,\,$\thanks{ Department of Mathematics, Technical University of Munich \& Munich Center for Machine Learning, Munich, Germany (\texttt{giacomo.borghi@tum.de})}
\and Lorenzo Pareschi\thanks{Department of Mathematics and Computer Science, University of Ferrara, Ferrara, Italy.}
\thanks{Maxwell Institute for Mathematical Sciences and Department of Mathematics, School of Mathematical and Computer Sciences (MACS), Heriot-Watt University, Edinburgh, UK
  (\texttt{l.pareschi@hw.ac.uk}).}
}

\usepackage{amsopn}

\usepackage{mathrsfs}
\usepackage{mathtools}
\usepackage{amssymb}
\usepackage[shortlabels]{enumitem}
\usepackage{bm}

\newcommand{\Rd}{ {\mathbb{R}^d}}

\renewcommand{\P}{\mathcal{P}}

\newcommand{\Td}{\mathbb{T}^d}

\newcommand{\ve}{\varepsilon}

\newcommand{\supp}{\mathrm{supp}}

\newcommand{\per}{{\mathrm{per}}}

\newcommand{\be}{\begin{equation}}
\newcommand{\ee}{\end{equation}}

\ifpdf
\hypersetup{
  pdftitle={},
  pdfauthor={G. Borghi, L. Pareschi}
}
\fi

\begin{document}

\maketitle

\begin{abstract}
We propose a fast deterministic particle method for the spatially
homogeneous Landau equation. The method combines a particle
representation of the solution with a Fourier approximation of the
nonlinear collision flux. Nonuniform fast Fourier transforms are used
to reconstruct the density from the particles and to evaluate the flux
and density at the particle locations, while the convolutional
structure of the flux enables its efficient computation by FFTs.
For a fixed transform tolerance, the cost per time step is
$\mathcal{O}(N+M^d\log M)$, where $N$ is the number of particles and
$M$ the number of Fourier modes per velocity dimension.
We establish a consistency estimate for
the reconstructed velocity field on regions where the reference
density is bounded away from zero. The estimate separates the spectral
truncation error from the particle-density reconstruction error, showing how the latter can dominate for sufficiently smooth densities.
Two-dimensional numerical experiments with Maxwellian interactions
illustrate the accuracy and efficiency of the method, its sensitivity
to particle and spectral resolution, and the effects of filtering.
Comparisons with a direct blob implementation demonstrate improved
accuracy at a strongly reduced computational cost.
\end{abstract}

\bigskip

\noindent
\textbf{Keywords.}
Landau equation; deterministic particle method; Fourier spectral method; nonuniform Fourier transform; collisional plasma physics; 
  

\tableofcontents

\medskip


\section{Introduction}

The Landau equation arises as a limiting form of the Boltzmann equation for Coulomb interactions in the so-called grazing collision limit. This asymptotic regime leads to a nonlinear Fokker–Planck operator that describes the cumulative effect of many small-angle deflections, typical in plasma physics and astrophysical settings. While this limit simplifies the treatment of long-range interactions, its numerical realization often requires careful handling of singular kernels and additional regularization parameters~\cite{PareschiToscaniVillani2003}.

In the context of plasma fusion research, accurately incorporating collisional effects is increasingly recognized as essential. Collisions influence critical mechanisms such as turbulent transport, neoclassical currents, and energy deposition. High-fidelity simulation of collisional dynamics is vital for improving predictive accuracy in magnetic and inertial confinement fusion scenarios \cite{pan2021importance,izacard2025fusion}. For instance, hybrid fluid-kinetic models embedding Landau-like operators have demonstrated improved capabilities in reproducing fusion burn propagation and current profiles  \cite{izacard2025fusion}. As a result, there is growing consensus that efficient and structure-preserving collisional solvers are crucial components of next-generation simulation tools for fusion-grade plasmas.

Direct Simulation Monte Carlo (DSMC) methods have historically played a central role in the numerical approximation of collisional kinetic equations, including the spatially homogeneous Landau equation. Originally developed for the Boltzmann equation, DSMC approaches have been adapted to handle the Fokker--Planck--Landau operator by interpreting the collision term as the limit of grazing-angle Boltzmann collisions \cite{BobylevNanbu2000,DimarcoCaflischPareschi2010}. These methods simulate binary interactions between particles using probabilistic rules, resulting in a stochastic algorithm that is simple to implement, preserves conservation properties and exhibits linear computational complexity with respect to the number of particles. However, DSMC schemes typically suffer from statistical noise and may require extensive averaging to achieve accurate results. Furthermore, the grazing collision limit introduces an additional parameter, which may affect the accuracy of the approximation. Despite these limitations, DSMC remains an attractive option due to its scalability and ease of extension to complex geometries and boundary conditions. Furthermore, recently in \cite{MedagliaPareschiZanella2025}, the authors have shown how these techniques can be efficiently coupled into existing particle-in-cell (PIC) codes to introduce collisional effects in plasma physics simulations. We refer also to \cite{DuLiXieYu2025} for related stochastic particle solvers.

Alongside stochastic particle approaches, a broad class of deterministic methods has been developed by discretizing the Landau equation directly in velocity space. Early finite-difference formulations were designed to reproduce, at the discrete level, the main structural properties of the collision operator. In particular, conservative and entropy-dissipating schemes were introduced in \cite{DegondLucquinDesreux1994, BC1999}, ensuring preservation of the collisional invariants and convergence toward discrete Maxwellian equilibria. A major difficulty of these approaches is the nonlocal dependence of the diffusion and friction coefficients on the distribution function, whose direct evaluation generally entails a quadratic computational cost with respect to the number of velocity degrees of freedom.

This limitation motivated the development of fast deterministic solvers exploiting the integral and convolutional structure of the Landau operator. In \cite{BCDL1997}, conservative and entropy-consistent discretizations were combined with multipole expansions, removing the quadratic cost associated with the direct evaluation of the velocity interactions. Related fast-multipole formulations were further analyzed in \cite{Lemou1998multipole}, while implicit conservative schemes and efficient iterative solvers were subsequently investigated to overcome the restrictive stability conditions associated with the diffusive character of the equation \cite{LemouMieussens2005}. An alternative direction was introduced in \cite{pareschi2000fastlandau}, by expressing the collision operator in Fourier variables and exploiting its weighted convolution structure to evaluate it by means of fast Fourier transforms. This method reduces the complexity of a deterministic discretization from the quadratic cost of a direct implementation to an FFT-based complexity, while retaining spectral accuracy and preserving mass exactly, with momentum and energy recovered up to spectral accuracy. These ideas have provided the basis for several spectral solvers for the Landau equation and its coupling with kinetic transport models~\cite{FP2002, PareschiToscaniVillani2003, ZhangGamba2017, PennieGamba2020}. Fast Fourier-based evaluation of collision operators has also been
developed for the Boltzmann equation~\cite{MouhotPareschi2006},
combining spectral accuracy with reduced computational complexity. Recent developments include a spectral collocation formulation
for Coulomb interactions~\cite{Filbet2020Collocation} and convergence
analysis of Fourier spectral approximations of the homogeneous
Landau equation~\cite{filbet2025numerical}. Related approaches have been proposed recently in \cite{CarrilloThalhammer2026}.

More recently, a number of advances have focused on the numerical approximation of the spatially homogeneous Landau equation through deterministic particle-based methods. In particular, the work \cite{carrillo2020particle} introduces a deterministic blob particle approach based on a mollified density \cite{craig2016blob} and a regularized evaluation of the Landau collision operator, leading to a system of coupled ordinary differential equations for the particle velocities. This method benefits from its variational structure and compatibility with energy conservation and entropy production principles, but its computational cost scales quadratically with the number of particles due to the dense pairwise interactions. Treecode acceleration was also explored
in~\cite{carrillo2020particle}. An alternative strategy proposed in \cite{CarrilloJinTang2022} employs a randomized batch approximation of the particle interactions to reduce the per-step cost to linear complexity, at the expense of introducing stochastic noise. In \cite{wang2025score,ilin2025transport}, instead, the authors reconstruct the particles' vector field by employing neural networks and score matching techniques.

\begin{figure}[t]
\centering
\includegraphics[width = 1\linewidth]{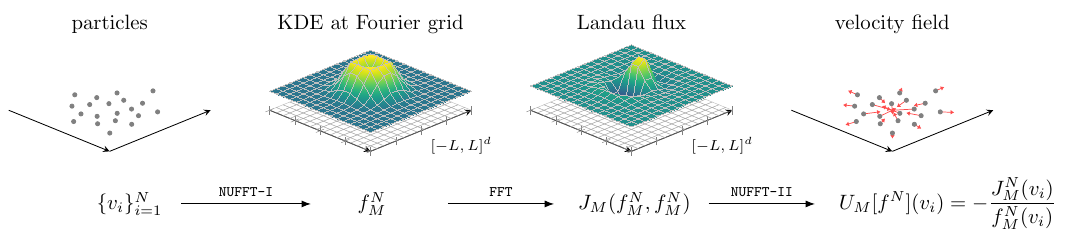}
\caption{Illustration of the fast spectral particle method for the Landau equation. 
$N$ is the number of particles, and $M$ the number of Fourier modes per dimension. The Fourier modes of the empirical measure are obtained via Non-Uniform FFT (Type I), and are used for the fast evaluation of the flux $J_M$. This corresponds to a Kernel Density Estimation (KDE) via the trigonometric polynomial $f^N_M = \P_M f^N$. 
The flux and density are then evaluated at the particle locations
using the uniform-to-nonuniform NUFFT (Type II), yielding the
velocity field $U_M[f^N]$ driving the particle evolution $\dot v_i = U_M[f^N](v_i)$. The overall computational cost is $\mathcal{O}(N + M^d \log M)$.}
\label{fig:illustration}
\end{figure}

In this work, we transfer the fast Fourier strategy underlying deterministic spectral solvers to a particle representation of the solution. More precisely, we propose a spectral particle method that combines the flexibility of particle-based solvers with the efficiency of fast spectral algorithms, exploiting the convolutional structure of the Landau flux in Fourier variables \cite{pareschi2000fastlandau}.
Given a set of $N$ particles we reconstruct the density in Fourier space via non-uniform Fourier Transform (FT), analogously to spectral PIC methods or particle-in-Fourier (PIF) methods for Vlasov--Poisson equations \cite{lehe2016pic,ameres2021particle,mitchell2019efficient,sonnen2024variational}. 
Then, we compute the particles' velocity vector field via a spectral approximation of the Landau flux and, again, non-uniform FT. See Figure \ref{fig:illustration} for an illustrative presentation of the proposed method.

Let $M$ be the number of modes per dimension, the direct non-uniform FT already scales as $\mathcal{O}(NM^d)$, and so linearly with respect to the number of particles. By using  Non-Uniform Fast Fourier Transform (NUFFT) techniques  \cite{potts2001nufft,GreengardLee2004} the cost can be reduced further to $\mathcal{O}(N|\log \epsilon|^d + M^d \log M)$ per time step. Here,  $\epsilon>0$ is the requested relative tolerance of the approximate NUFFT evaluation. 
Therefore, the Fourier-based computation of the velocity field is linear in the number of particles if compared to the blob-type particle method with cost $\mathcal{O}(N^2)$ proposed in \cite{carrillo2020particle}. While sharing the same computational complexity $\mathcal{O}(N)$ as DSMC \cite{DimarcoCaflischPareschi2010}, the accuracy achieved is higher by several orders of magnitude, see \cite{MedagliaPareschiZanella2024}.
In the present formulation the particle weights preserve total mass exactly while
momentum, energy and entropy are monitored numerically; their
preservation or dissipation is not enforced by the proposed scheme. We will leave to future research the adaptation of the techniques recently proposed in \cite{PareschiRey2022} to the current setting.

The theoretical analysis we perform (see Theorem \ref{t:main}) shows that the error introduced by spectral approximation is negligible with respect to the kernel density estimation one, allowing us to take a much smaller number of modes $M$ with respect to the number of particles $N$.  Numerical experiments over the two-dimensional  BKW benchmark test validate the analysis and showcase the efficiency and accuracy of the spectral particle method, and compare it to the blob method.


After recalling the homogeneous Landau equation, in Section \ref{sec:derivation} we derive the spectral particle method starting from its formulation as a continuity equation. In Section \ref{sec:error} we provide an error analysis for the spectral reconstruction of the velocity vector field. Section \ref{sec:num} is devoted to the numerical implementation of the method and validation against 2D benchmark problems. Final remarks and perspective research direction conclude the manuscript.

\section{A fast spectral particle method}
\label{sec:derivation}

We consider a spatially homogeneous particle distribution $f(t,v)$, $v \in\Rd$, evolving according to the Landau equation
\begin{equation} \label{eq:landau}
\frac{\partial f}{\partial t}  = Q(f,f)
\end{equation}
with 
\[Q(f,f)(t,v) :=  \nabla \cdot \int_{\Rd} A(v - v_*)\left( f(t,v_*) \nabla_v f(t,v) -  f(t,v) \nabla_{v_*} f(t,v_*) \right) dv_*\,.\]
In the above equation, $A(z)$ is the Landau tensor 
\[
A(z) = |z|^{\gamma + 2} \left( I - \frac{z \otimes z}{|z|^2} \right)\,,
\]
where $I$ is the $d$-dimensional identity matrix, and $\otimes$ the outer product. The parameter $\gamma$ satisfies $-d -1 \leq \gamma \leq 1$ with $d\geq 2$. The most relevant case is the so-called Coulomb one where $\gamma = -3$, $d = 3$, which describes the particles' collisions in plasmas.

By integrating $Q(f,f)$ against the test function $\phi(v) = 1, v, |v|^2$, one can see that solutions to \eqref{eq:landau} preserve mass, momentum, and energy. With $\phi(v) = \log f(v)$, instead, it is possible to prove the decay of the entropy $\int \log(f) f$. The equilibrium and asymptotic state is given by the Maxwellian distribution 
\begin{equation} \label{eq:maxwellian}
\mathcal{M}(v) = \frac{\rho}{(2 \pi T)^{d/2}} \exp\left(  -\frac{|v - u|^2}{2T}\right) 
\end{equation}
with $\rho, T>0$ and $u\in \Rd$ constants dependent on the distribution at $t = 0$.

\subsection{Deterministic particle approximation}

The derivation of a particle method for \eqref{eq:landau} starts by noting that the equation can be written, see \cite{carrillo2020particle},  as a  continuity equation
\begin{equation}
\label{eq:continuity}
\frac{\partial f}{\partial t} + \nabla \cdot \left( U[f]\, f \right) = 0
\end{equation}
with nonlinear velocity vector field given by
\begin{equation}
U[f](v) :=  -  \int_{\Rd} A(v - v_*) \left( \frac{\nabla f(v)}{f(v)} - \frac{\nabla f(v_*)}{f(v_*)}  \right) f(v_*)\, dv_*\,.
\label{eq:U}
\end{equation}
Next we approximate $f$ via an empirical probability distribution 
\begin{equation}
\label{eq:fN}
f^N(t,v) = \sum_{i = 1}^N w_i \delta(v - v_{i}(t))
\end{equation}
associated with a set of $N$ particles $v_i(t) \in \Rd, i = 1,\dots,N$ with fixed, but possibly different, weights $w_i>0, i = 1,\dots, N$.  

It is important to note now that the velocity field $U[f^N]$ is not well defined as the empirical measure comprises Dirac delta distributions which do not admit a density. To evaluate the velocity field, therefore, we reconstruct $f$ via kernel density estimation (KDE). Let $\psi$ be a sufficiently regular reconstruction kernel of unit
mass. The reconstructed density is given by
\begin{equation} \label{eq:fNve}
\tilde{f}^N(t,v) = \sum_{i=1}^N w_i \psi(v - v_i(t))\,.
\end{equation}
Depending on how well $\psi$ approximates $\delta$,  the reconstructed density $\tilde{f}^N$ can be considered a good approximation of the original distribution $f$, such that, most importantly, 
\[ U[\tilde{f}^N]\approx U[f]\,.\]

Typical examples of $\psi$ are the Gaussian kernel and B-splines $\psi = \psi_\ve$ with a given variance parameter $\ve>0$, but the choice can be even more general and include direct neural network approximation of $\tilde f^N$. As we will see in the next sections, we consider here $\psi = \psi_M$ to be a trigonometric polynomial arising from a Fourier truncation for $M\in \mathbb{N}$.

Given a KDE $\tilde{f}^N$, a deterministic particle scheme for the Landau equation is given then by the system of ODEs
\begin{equation} \label{eq:odes1}
\frac{d }{dt}{v}_i(t) = U[\tilde{f}^N] \left(v_i(t) \right) \qquad i = 1,\dots, N\,.
\end{equation}
We show next how to derive a fast computation of $U[\tilde{f}^N]$ by relying on the convolutional structure of the Landau flux.

\subsection{Spectral computation of the velocity field}

The spectral method for the Landau equation \cite{pareschi2000fastlandau}  is based on the fact that, after truncation to a bounded domain, the collisional operator $Q(f, f)$ can be exactly computed in terms of the Fourier modes of $f$, provided $f$ is a periodic function over a domain $[-L,L]^d$. Also, the computation is very efficient due to its convolutional structure. While sharing a similar structure, the velocity field $U[f]$ cannot be written exactly in terms of the Fourier modes due to the presence of the fractions $\nabla f(v)/ f(v)$ and  $\nabla f(v_*)/ f(v_*)$, see \eqref{eq:U}.
 To address this problem, we write the velocity field as 
\begin{equation} \label{eq:UJf}
 U[f](v) =  - \frac{J(f,f)(v)}{f(v)}
  \end{equation}
  where $J$ is the flux 
 \be \label{eq:J}
J(f,g)(v) := \int_{\mathbb{R}^d} A(v - v_*) \left[ g(v_*) \nabla f(v) - f(v) \nabla g(v_*) \right] dv_*\,
\ee
which satisfies $Q(f,f) = \nabla\cdot J(f,f)$, see \eqref{eq:landau}. As done for the collisional operator $Q(f,f)$ in \cite{pareschi2000fastlandau}, the flux can be computed in a spectrally accurate and efficient way via  FFT. We recall the main steps of the derivation. 

The first step consists of restricting the flux evaluation over the torus $\Td_L =\Rd /(2L\mathbb Z)^d$. For periodic functions $F,G\in C^1(\Td_L)$ we define
\begin{equation}\label{eq:Jper}
 J_\per(F,G)(v)
:=
\int_{[-L,L]^d}
A(q)\left[
G(v-q)\nabla F(v)-F(v)\nabla G(v-q)
\right]\,dq \,,
\end{equation}
where $v-q$ is interpreted modulo $2L\mathbb Z^d$.  Note that the integral is now with respect to the relative velocity $q=v-v_*$. As we will show in Lemma \ref{l:truncation}, if the domain length $L$ is sufficiently large with respect to the support of the particles' distribution, integrating over $\Td_L$ or over $\Rd$ is equivalent.

Let $\P_M: L^2(\Td_L)\to L^2(\Td_L)$ denote the projection into the space of trigonometric polynomials of degree at most $M/2\in \mathbb{N}$; we have
  \[
\P_M f (v) := \sum_{k \in \llbracket -M/2,M/2-1\rrbracket^d} \hat{f}_k e^{i \pi k \cdot v / L},
\quad
\hat{f}_k = \frac{1}{(2L)^d} \int_{[-L,L]^d} f(v) e^{-i \pi k \cdot v / L} dv,
\]
where we used the notation $\llbracket a, b \rrbracket^d = [a,b]^d \cap \mathbb{Z}^d$  for the multidimensional summation along each component of the integer vector $k$. 
For given sufficiently smooth periodic  functions $F,G$, we consider the truncated flux
\begin{equation}\label{eq:JM}
J_M(F,G)
:=
\P_M J_\per ( F, G).
\end{equation}

Thanks to this approximation, the modes of $J_M(\P_M f, \P_M f)$ can be computed via the modes of $f$  very similarly
as in the spectral method for the collisional operator \cite{pareschi2000fastlandau}. We work out the details in the following.

By plugging the definition of $\P_M f$ into \eqref{eq:Jper}
\begin{align*}
J_{\mathrm{per}}(\P_M f,&\P_M f)(v)
= \int A(q) \Bigg[
\frac{i\pi}L \sum_{m,\ell} \hat{f}_m \hat{f}_l \exp\left(\frac{i\pi}L ( l \cdot v + m\cdot(v-q))\right) l
  \\
& \hspace{3cm}
- \frac{i\pi}L  \sum_{m,\ell} \hat{f}_m \hat{f}_l \exp\left(\frac{i\pi}L ( l \cdot v + m\cdot(v-q))\right) m
\Bigg] dq \\
& =   \int A(q) 
\frac{i\pi}L  \sum_{m,\ell} \hat{f}_m \hat{f}_l \exp\left(\frac{i\pi}L ( l \cdot v + m\cdot(v-q))\right) (l- m) dq \\
&  = \sum_{m,\ell} \hat{f}_m \hat{f}_l \int \frac{i\pi}LA(q) \exp\left(\frac{i\pi}L ( l \cdot v + m\cdot(v-q))\right) (l- m) dq \\
&  = \sum_{m,l} \hat{f}_m \hat{f}_l  \left(\frac{i\pi}L(l - m) \int A(q) \exp\left(-\frac{i\pi}L  m\cdot q \right) dq\right) \exp\left(\frac{i \pi}L  (m+l)\cdot v \right) \\
& = \sum_{m,l} \hat{f}_m \hat{f}_l \hat{\beta}(l,m) \exp\left(\frac{i \pi}L  (m+l)\cdot v \right)
\end{align*}
where the spectral kernel vector $\hat \beta(l,m)$ can be written as
\[
\hat \beta(l,m) : = \frac{i \pi}L (l - m) \int A(q)e^{-i\pi m \cdot q/L} dq\,.
\]
From $A(q) = \Psi(q) (I  - \mu\mu^\top)$, with $\mu = q/|q|$, $\Psi(q) = |q|^{\gamma + 2}$ we have
\begin{align*}
\hat \beta(l,m) 
& =   \frac{i \pi}L (l - m) \int \Psi(q) (I  - \mu\mu^\top) e^{-i\pi m \cdot q/L} dq \\
& =   \frac{i \pi}L 
\begin{pmatrix}
l_1 - m_1 \\
l_2 - m_2
\end{pmatrix}
\! \int \! \Psi(q)  e^{-i\pi m \cdot q/L} dq 
 \\ & \qquad \qquad
- 
\begin{pmatrix}
l_1 - m_1 \\
l_2 - m_2
\end{pmatrix}
\cdot \frac{i \pi}L  \!\int \!\Psi(q) 
\begin{pmatrix}
\mu_{1}^2 & \mu_{1}\mu_2 \\
\mu_{1}\mu_2 & \mu_2^2 
\end{pmatrix}
 e^{-i\pi m \cdot q/L} dq.
\end{align*}

Now, if we define 
\[
D(m) :=\frac{i\pi}L \int \Psi(q) e^{-\frac{i\pi}L m \cdot q}dq  \qquad \textup{and}\qquad 
I_{pq}(m) :=  \frac{i\pi}L\int \Psi(q) \mu_{p}\mu_q e^{-\frac{i\pi}L m\cdot q}  dq \,,
\]
we get $\hat \beta (l,m) =  (l - m) D(m) - (l - m) I(m)$.
Applying the projection $\P_M$ and setting $\ell=k-m$, we obtain,
for each retained mode $k$,
\begin{align}
(\hat J_M)_{k}(\P_M f, \P_M f) &= \sum_{m}  \hat f_m \hat f_{k-m} \hat \beta (k - m, m) \notag \\
& = \sum_{m}  \hat f_m \hat f_{k-m} \left(
(k - 2m) D(m)  -  (k - 2m) I(m) \right)  \label{eq:JMk}
 \\
& = \sum_{m}  \hat f_m \hat f_{k-m} \left(
(k - m) D(m)  - (k - m) I(m)  \right) \\
& \qquad \qquad
 - \sum_{m} \hat f_m \hat f_{k-m} m\left( D(m) - I(m) \right)\,. \notag
\end{align}
Here the sums run over indices $m$ such that both $m$ and $k-m$
belong to the retained Fourier index set.
The above expression can be efficiently computed via FFT once the modes $\hat f_k$ and $D, I_{pq}$ are available. Note that $D$ and $I_{pq}$ can be computed once at the beginning of the computation, and correspond to the same kernel modes used in the spectral methods for the Landau equation \cite{pareschi2000fastlandau}.

\begin{remark}
Other choices to obtain a spectral approximation of $U[f]$ are possible. For instance, if a spectral approximation of $g = \nabla f/f$ is available, then one can 
separate the vector field into a drift and a diffusion term as 
\[U[f](v) =  - \int A(q) f(v - q) g(v) dq + \int A(q) \nabla f(v - q) dq\,,\]
which can also be computed via FFT. We found this strategy to be less stable than approximating $U[f]$ as in \eqref{eq:UJf}, possibly because a spectral approximation of $g = \nabla f/f$ introduces an error upstream in the computations.
\end{remark}

\subsection{Algorithm via non-uniform FFT}

We now illustrate the fast spectral particle scheme which combines the particles' evolution \eqref{eq:odes1} with the spectral velocity field evaluation. 
As a KDE for the particles' distribution, we will directly consider the truncated Fourier expansion $\P_M f^N$ of the empirical measure $f^N$. 
Given a truncation floor $\underline{f}>0$, the approximated velocity vector field at a particle's location $v_i$ is given by 
\begin{equation} \label{eq:UM}
U_M[f^N](v_i) := - \frac{J_M(\P_M f^N ,\P_M f^N)(v_i)}{\max\{\P_M f^N(v_i), \underline{f}\}} \,.
\end{equation}
Since $\P_M f^N$ may take non-positive values, the truncation ensures that $U_M$ is well-defined.

We are led to the system of ODEs
\begin{equation} \label{eq:odes2}
\frac{d }{dt}{v}_i(t) = U_M[f^N] \left(v_i(t) \right) \qquad i = 1,\dots, N\,,
\end{equation}
which can be interpreted as a discrete version of the nonlinear characteristic flow associated with the Landau equation in divergence form, where each particle evolves under the influence of the spectrally reconstructed velocity vector field $U_M[f^N]$. This aligns with the semi-Lagrangian perspective typically used in deterministic kinetic solvers.

As mentioned in the previous section, once the modes ${(\hat f^{N})}_k$ are available, the computation of the flux modes $\hat J_k^M$ can be obtained via FFT with a cost $\mathcal{O}(M^d \log M)$. To compute the modes of the empirical density $\P_M f^{N}$, we use the Non-Uniform FFT (NUFFT) Type-I corresponding to the nonuniform-to-uniform operation \cite{GreengardLee2004}. To evaluate the reconstructed flux and the reconstructed density at the particles' locations $v_i$ we use the uniform-to-nonuniform operation, NUFFT Type-II.  (Type-III corresponds to the case nonuniform-to-nonuniform which is not required in the proposed scheme.)
The computational cost of both NUFFT-I and NUFFT-II is $\mathcal{O}(N |\log \epsilon|^d + M^d \log M)$ where $\epsilon>0$ is a tolerance parameter of NUFFT algorithms \cite{potts2001nufft,GreengardLee2004}. Therefore, the overall computational cost in terms of $N,M$ for one time integration step of \eqref{eq:odes2} is
\[
\mathcal{O}(N + M^d \log M)\,.
\]
Algorithm \ref{alg} summarizes the main steps of the spectral particle method implemented with explicit Euler time integration, and highlights the computational complexity per step. The computational domain size $L$, the NUFFT tolerance $\epsilon$,
and the denominator floor $\underline{f}$ are additional parameters
of the algorithm. For Runge--Kutta time integration, the density
reconstruction and flux evaluation are repeated at each stage;
a fixed number of stages does not change the asymptotic complexity.

\SetKwComment{MyComment}{}{}      

\begin{algorithm}[t]
\caption{Fast spectral particle method}
\label{alg}
\hspace{-3mm}
\DontPrintSemicolon

Particles $\{v_i(0)\}_{i=1}^N \subset [-L,L]^d$ with weights $\{w_i\}_{i=1}^N$ 
 \MyComment*[r]{input}
 
Modes per dimension $M$

Time step $\Delta t >0$


Precompute 
Landau kernel coefficients \(D, I\)  \MyComment*[r]{initialization}

$t = 0$

\While{$t < t_{\max} $
}{
Compute $\hat f ^N$ \eqref{eq:fNk} via NUFFT-I 
 \MyComment*[r]{$N + M^d \log M$}
 
 Compute $ \hat J_{M}(\P_M f^N, \P_M f^N )$  \eqref{eq:JMk} via FFT
  \MyComment*[r]{$M^d \log M$}

Compute  $U_M[f^N](v_i(t))$ via \eqref{eq:UM} and NUFFT-II
 \MyComment*[r]{$N + M^d \log M$}
 
Particles' time evolution: for $i = 1,\dots,N$    \MyComment*[r]{$N$}
\vspace{2mm}
$\qquad  v_i(t+\Delta t ) = v_i(t) + \Delta t U_M[f^N] (v_i(t))$

\vspace{2mm}
      $t = t + \Delta t$   \MyComment*[r]{total cost:  $N + M^d \log M$ }
    }
 
  \end{algorithm}

Note that, if needed, the reconstructed density can be regularized by applying a spectral filter to the coefficients $(\hat f ^N)_k$, without affecting the computational cost. Filters damp high-frequency components and therefore may reduce aliasing and improve numerical stability. 
It is interesting to note that $f^N_M = \P_M f^N$ can be written in the form of \eqref{eq:fNve} with $\psi(x) = \psi_M(x) = 1/(2L)^d\sum_k \exp(i\pi k\cdot x/L)$, $k \in  \llbracket -M/2,M/2-1\rrbracket^d$, a Dirichlet periodic polynomial. Indeed, since the Fourier coefficients are given by 
\begin{equation}\label{eq:fNk}
\hat f^N_k = \frac 1{(2L)^d} \sum_{i=1}^N w_i e^{- i \pi k\cdot v_i/L }\,, \qquad k \in  \llbracket -M/2,M/2-1\rrbracket^d
\end{equation}
then
\begin{equation} \label{eq:fNM}
\begin{aligned} 
f_M^N(v)
= P_M f^N(v) 
 = \sum_k \hat{f}_k^N
   e^{i\pi k\cdot v/L} &= \frac{1}{(2L)^d}
   \sum_{k }\sum_{i=1}^N
   w_i e^{-i\pi k\cdot v_i/L}
       e^{i\pi k\cdot v/L} \\
&= \sum_{i=1}^N w_i
   \left(
      \frac{1}{(2L)^d}
      \sum_{k}
      e^{i\pi k\cdot (v-v_i)/L}
   \right) .
\end{aligned}\end{equation}
While $\psi_M$ has mass one, $\int_{[-L,L]^d}\psi_M  = 1$, and so $f^N_M$ preserves the mass, it might take negative values.

\begin{remark}
Directly computing the Fourier coefficients  of $f^N$ via NUFFT corresponds in the literature to the Fourier-in-cell (FIC) method \cite{mitchell2019efficient,ameres2018stochastic}.  The alternative approach is the spectral PIC  method \cite{lehe2016pic,sonnen2024variational} where one first computes the value of a mollified empirical density $\tilde{f}^N(\overline{v}_k)$, $k \in \llbracket -M/2,M/2-1 \rrbracket^d$  at the uniform Fourier grid $\overline{v}_k \in [-L,L]^d$ and then performs the usual equispaced FFT.  As noted in \cite{ameres2021particle}, the spectral PIC method can actually well approximate the grid-less FIC approach. Also, some NUFFT algorithms may include an interpolation step on the Fourier grid, making the two approaches more similar in practice, see, e.g., the discussion in  \cite{barnett2019parallel}. In the methods cited above, the particle-to-Fourier
mapping acts on particle positions in physical space to compute
the self-consistent fields. Here, the same computational principle
is applied in velocity space: the Fourier modes are reconstructed
from the particle velocities and used to evaluate the collisional
velocity field driving their evolution.    
\end{remark}



\section{Error analysis}
\label{sec:error}

The central feature of the algorithm is the approximation of the velocity field $U[f]$ with the spectrally reconstructed $U_M[f^N]$ \eqref{eq:UM}. We investigate in this section the consistency of the approximation by studying the error in terms of the number of modes $M$ and the distance between $f^N_M$ and $f$. We will consider any interaction potential with $\gamma$ in the range $-d-1\leq \gamma \leq 1$.

Throughout, for a domain $\Omega\subset \mathbb R^d$,
$C^m(\Omega)$ is the set of functions, possibly vector-valued, differentiable up to the order $m\in \mathbb{N}\cup \{0\}$ and with continuous derivatives. We will also use
\[
\|h\|_{C^0(\Omega)}:=\sup_{v\in \Omega}|h(v)|,
\qquad
\|h\|_{C^1(\Omega)}:=\|h\|_{C^0(\Omega)}+\|\nabla h\|_{C^0(\Omega)}.
\]
For $s\geq 0$ and a periodic function
$h\in L^2(\mathbb T_L^d)$ with Fourier coefficients
$\widehat h_k$, $k\in\mathbb Z^d$, we define
\[
\|h\|_{H^s(\mathbb T_L^d)}^2
:=
(2L)^d
\sum_{k\in\mathbb Z^d}
\left(
1+\frac{\pi^2|k|^2}{L^2}
\right)^s
|\widehat h_k|^2.
\]
If $\supp(h)\subset [-L,L]^d$, we identify it with its periodic extension and set $\|h\|_{H^s(\Rd)}:=\|h_\per\|_{H^s(\Td_L)}$. When the domain is clear, we simply write
$\|h\|_{H^s}$ and $\|h\|_{C^k}$. 
The same notation is used for vector-valued functions.

\begin{theorem}\label{t:main}
Consider $\gamma$ such that $-d-1 \leq \gamma \leq 1$. 
Let $f\in C^\infty(\mathbb R^d)$ be supported in $B(0,R)$ and  $f_M^N:=\P_M f^N\in C^\infty(\Td_L)$.
For $\kappa>0$, define
\[
\Omega_\kappa
:=
\{v\in \Rd \, |\,  f(v)>\kappa\}.
\]
Let $L>2R$, $s>d/2+1$, and the floor parameter $\underline{f}>0$ be such that  $\underline f\leq\kappa/2$. 

If
$ \|f_M^N-f\|_{C^0(\Td_L)}
\leq \kappa/2$ and
$
\|f_M^N-f\|_{H^{s}(\Td_L)}
\leq 1,
$
there exists a constant $C = C(\gamma,d,R,L,s,\|f\|_{H^s})>0$, such that
\[
\|U_M[f_M^N]-U[f]\|_{C^0(\Omega_\kappa)}
\leq
C \left(
\frac{1}{\kappa}
M^{-(s-d/2-1)}
+
\frac{1}{\kappa^2}
\|f_M^N-f\|_{C^1(\Td_L)} \right) .
\]
\end{theorem}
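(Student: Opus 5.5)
We split the error into a numerator (flux) error and a denominator (density) error. On $\Omega_\kappa$ we have $f>\kappa$, and by the assumption $\|f_M^N-f\|_{C^0}\le\kappa/2$ also $f_M^N>\kappa/2\ge\underline f$. Hence the floor in \eqref{eq:UM} is inactive and
\[
U_M[f_M^N]-U[f] = -\frac{J_M(f_M^N,f_M^N)-J(f,f)}{f_M^N} - J(f,f)\left(\frac{1}{f_M^N}-\frac1f\right).
\]
The first term is bounded by $\frac{2}{\kappa}\|J_M(f_M^N,f_M^N)-J(f,f)\|_{C^0(\Omega_\kappa)}$. The second is bounded by $\frac{2}{\kappa^2}\|J(f,f)\|_{C^0}\|f_M^N-f\|_{C^0}$. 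Since $\|J(f,f)\|_{C^0}$ depends only on $\gamma,d,R,\|f\|_{H^s}$, this gives the $\kappa^{-2}$ contribution. (Here $f_M^N$ is already a trigonometric polynomial, so $\P_M f_M^N=f_M^N$.)

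For the flux we first use the truncation property (Lemma \ref{l:truncation}): since $\supp f\subset B(0,R)$ and $L>2R$, for $v\in\Omega_\kappa\subset B(0,R)$ we have $J(f,f)(v)=J_\per(f,f)(v)$, because $|v-v_*|\le 2R<L$. We then split
\[
J_M(f_M^N,f_M^N)-J_\per(f,f) = \big(\P_M-I\big)J_\per(f_M^N,f_M^N) + \big(J_\per(f_M^N,f_M^N)-J_\per(f,f)\big).
\]
The second bracket is handled by bilinearity: it equals $J_\per(f_M^N-f,f_M^N)+J_\per(f,f_M^N-f)$. Each piece is controlled in $C^0$ by a $C^1\times C^1$ bound of the form $\|J_\per(F,G)\|_{C^0}\le C_A\|F\|_{C^1}\|G\|_{C^1}$, with $C_A=\int_{[-L,L]^d}|A(q)|\,dq$. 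This constant is finite when $\gamma>-d-2$, which the range $\gamma\ge -d-1$ guarantees.

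At the endpoint $\gamma=-d-1$, $A$ is still integrable, but the term $\int A(q)\nabla G(v-q)\,dq$ should really be integrated by parts to $\int(\nabla\cdot A)\,G$, which is weakly singular like $|q|^{\gamma+1}=|q|^{-d}$. Near this endpoint I would instead keep $\nabla$ on $G$ and use only $|A|\sim|q|^{\gamma+2}$, which is integrable; this suffices. The factor $\|f_M^N\|_{C^1}$ is bounded by $\|f\|_{C^1}+\|f_M^N-f\|_{C^1}$. By the Sobolev embedding $H^s\subset C^1$ for $s>d/2+1$, both pieces are bounded in terms of $\|f\|_{H^s}$ and the assumption $\|f_M^N-f\|_{H^s}\le1$. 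This yields $C\|f_M^N-f\|_{C^1}$, which is absorbed into the $\kappa^{-2}$ term since $\kappa\lesssim\|f\|_{C^0}$.

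The first bracket is the spectral truncation error, and it is the main obstacle. The standard estimate for $h\in H^{s}(\Td_L)$ with $s>d/2$ is
\[
\|h-\P_M h\|_{C^0}\le\sum_{|k|_\infty\ge M/2}|\hat h_k|\le C M^{-(s-d/2)}\|h\|_{H^s},
\]
obtained by Cauchy--Schwarz. We need $J_\per(F,F)\in H^{s-1}$ with $\|J_\per(F,F)\|_{H^{s-1}}\le C\|F\|_{H^s}^2$, which gives the rate $M^{-(s-1-d/2)}$. To prove this, write $J_\per(F,F)=(A*_\per F)\nabla F-F\,(A*_\per\nabla F)$. Periodic convolution with the integrable kernel $A\mathbf 1_{[-L,L]^d}$ is bounded on every $H^r$, since its Fourier multipliers are bounded by $\|A\|_{L^1}$. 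Since $H^{s-1}$ is an algebra for $s-1>d/2$, we get $\|J_\per(F,F)\|_{H^{s-1}}\le C\|F\|_{H^s}^2$. With $F=f_M^N$ and $\|f_M^N\|_{H^s}\le\|f\|_{H^s}+1$, this bounds the first bracket by $CM^{-(s-d/2-1)}$. Multiplied by $2/\kappa$, it gives the first term of the theorem. The delicate points are exactly the uniform $L^1$ bound on $A$ over the whole range of $\gamma$ and the algebra property; collecting all constants then finishes the proof.
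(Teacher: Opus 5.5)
Your proposal is correct and matches the paper's proof step for step. Both arguments use the inactive floor on $\Omega_\kappa$, the numerator/denominator splitting, Lemma~\ref{l:truncation}, the decomposition of the flux error into $(\P_M-I)J_\per(f_M^N,f_M^N)$ (controlled by the $C^0$ projection estimate and the $H^{s-1}$ algebra bound) plus a bilinear $C^1$-stability term, and finally absorb $\kappa^{-1}$ into $\kappa^{-2}$ via $\kappa<\|f\|_{C^0}$. Your aside on integrating by parts at $\gamma=-d-1$ is unnecessary (and $|q|^{-d}$ is borderline non-integrable rather than weakly singular), but you keep the gradient on $G$ exactly as the paper does, so it does not affect the argument.
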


The analysis shows that the approximation of $U$ with $U_M$ is spectrally accurate, and therefore we expect the error to be dominated by the reconstruction of the particle density $f$ from the empirical distribution $f^N$. This is done here via the trigonometric polynomials $f^N_M = \P_M f^N$, but additional filtering or different particle shapes can be considered without affecting the error decomposition (see Remark \ref{rmk:eps}).

In the numerical experiments we perform,  particles are initialized on a Cartesian grid of mesh size $h$, with weights
$
w_i\propto f(v_i)h^d, \sum_i w_i = 1,
$ such that 
$f^N=\sum_{i=1}^N w_i\delta_{v_i}$.
If the particle grid is sufficiently fine with respect to the standard Fourier grid, we expect $\P_M f^N \approx f^M = \P_M f$. 
At time $t=0$, the reconstruction error can be decomposed as
\[
\|f_M^N-f\|_{C^1(\mathbb{R}_L^d)}
\le
\|P_M(f^N-f)\|_{C^1(\mathbb{R}_L^d)}
+
C M^{-(s-d/2-1)}\|f\|_{H^s(\mathbb{R}_L^d)}.
\]
The first term accounts for the particle quadrature error in the
retained Fourier coefficients. If this term is sufficiently small,
the reconstruction inherits the spectral accuracy of the Fourier
projection. This initial accuracy may deteriorate at positive times
due to the distortion induced by the particle flow~\cite{cottet1984particle}.
This effect can be mitigated, for instance, by particle remappings   \cite{denavit1972numerical,crouseilles2009forward} or linear transformation of the particles' shapes \cite{cohen2000optimal,pinto2016uniform}. A detailed discussion on the accuracy of Fourier density reconstruction when using Monte Carlo initialization of the particles can be found in \cite[Chapter 3]{ameres2018stochastic}.

An important caveat of Theorem \ref{t:main} is the restriction to the domain $\Omega_\kappa$ where the density is strictly positive $f(v) > \kappa>0$ and the accuracy of $U_M$ is poor near vacuum. 
However, it is important to remark that the numerical method evaluates the velocity field only at particle
locations, with the denominator floor ensuring that it remains
well-defined even when the reconstructed density is nonpositive.


Finally, estimates for non-compactly supported densities may be derived following the strategy proposed in  \cite{filbet2025numerical} for the convergence analysis for spectral methods for the Landau equation. The full convergence analysis of the blob particle method for a regularized Landau equation, instead, has been proposed in \cite{carrillo2023convergence} via a coupling method and the $L^\infty$-Wasserstein metric.

\begin{remark}\label{rmk:eps}
Particle smoothing or Fourier filtering can be incorporated in the method to mitigate numerical instability and aliasing effects \cite{sonnen2024variational}, without affecting the error analysis. In the case of B-splines or Gaussian filtering, for instance, this corresponds to substituting $f_M^N = \P_M(f^N)$ with $f^{N,\ve}_M : = \P_M( \psi_\ve \ast f^N)$ where $\psi_\varepsilon(v)=\varepsilon^{-d/2}\psi(v/\sqrt{\varepsilon})$ is the mollifier. Error estimates under such smoothing in terms of the number of particles $N$ and the parameter $\ve>0$ can be found in \cite{kim2019uniform} for Monte Carlo initialization, and in \cite{anderson1985,ala2007corrective} for grid initialization. We will also investigate the effect of filtering numerically in Section \ref{sec:num} in a benchmark test.  
\end{remark}

\subsection{Error due to spectral approximation of the flux}

To prove Theorem \ref{t:main}, we first investigate the error introduced by approximating $J$ \eqref{eq:J} with $J_{M}$ \eqref{eq:JM}.
As in Theorem's assumptions, we  consider compactly supported data for which there are no truncation errors or aliasing effects, provided the truncation domain $[-L,L]^d$ is sufficiently large.

\begin{lemma}\label{l:truncation}
Let $f,g \in C^1(\Rd)$ be compactly supported over $B(0,R)$. If $L>2R$ then
\[
J(f,g)(v) = J_\per(f_\per,g_\per)(v) \quad \textup{for all} \quad v\in [-L, L]^d
\]
where  $f_\per, g_\per \in C^{1}(\Td_L)$ are their corresponding periodic extensions over the torus $\Td_L$.
\end{lemma}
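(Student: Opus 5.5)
The plan is to compare the two integrals directly after the change of variables $q=v-v_*$. Neither spectral information nor earlier results are needed, only support considerations.

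\emph{Step 1: reduction to a common form.} Substituting $v_*=v-q$ in \eqref{eq:J} gives
\[
J(f,g)(v)=\int_{\Rd}A(q)\left[g(v-q)\nabla f(v)-f(v)\nabla g(v-q)\right]dq .
\]
This integral is well defined because $f,g\in C^1$ are compactly supported. Moreover $|A(q)|\lesssim|q|^{\gamma+2}$ with $\gamma+2\geq 1-d>-d$, so $A$ is locally integrable and the integrand lies in $L^1$. The same bound shows that \eqref{eq:Jper} is finite. Since $L>2R$, we have $B(0,R)\subset(-L,L)^d$. Hence for $v\in[-L,L]^d$ we get $f_\per(v)=f(v)$ and $\nabla f_\per(v)=\nabla f(v)$.

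\emph{Step 2: trivial case $|v|>R$.} If $|v|>R$, then $f(v)=0$ and $\nabla f(v)=0$. The same holds for $f_\per$ at this $v$. Both integrands therefore vanish identically, and $J(f,g)(v)=0=J_\per(f_\per,g_\per)(v)$.

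\emph{Step 3: case $|v|\le R$, the main step.} This step has two parts.

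First, the integral over $\Rd$ reduces to one over $[-L,L]^d$. The $\Rd$ integrand is nonzero only if $v-q\in B(0,R)$. Then $|q|\le |v|+|v-q|\le 2R<L$, so $q\in(-L,L)^d$, and the integral over $\Rd$ equals the integral over $[-L,L]^d$.

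Second, and this is the only delicate point, no periodic image of the support of $g$ enters the $[-L,L]^d$ integral. So we must show that $g_\per(v-q)=g(v-q)$ and $\nabla g_\per(v-q)=\nabla g(v-q)$ for $q\in[-L,L]^d$. Suppose $g_\per$ or its gradient is nonzero at $v-q$. Then $v-q=w+2Lj$ for some $w\in B(0,R)$ and $j\in\mathbb Z^d$. We claim $j=0$. Write $q=(v-w)-2Lj$. Each component satisfies $|(v-w)_i|\le|v-w|\le 2R<L$. If $j_i\neq0$, then $|q_i|\ge 2L-|(v-w)_i|>L$, which contradicts $q\in[-L,L]^d$. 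Hence $j=0$, so $v-q=w$ is itself a point of the original support. Consequently the $[-L,L]^d$ integrand of $J_\per(f_\per,g_\per)$ coincides pointwise with the $\Rd$ integrand of $J(f,g)$ restricted to $[-L,L]^d$. Conversely, whenever $g(v-q)\neq0$ or $\nabla g(v-q)\neq0$, we have $g_\per(v-q)=g(v-q)$ and likewise for the gradient.

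\emph{Conclusion.} Combining Steps 2 and 3 gives $J(f,g)(v)=J_\per(f_\per,g_\per)(v)$ for all $v\in[-L,L]^d$.

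I expect the main obstacle to be the bookkeeping in Step 3, namely ruling out contributions from shifted copies of $\supp g$. The componentwise estimate above settles it using only the margin $L>2R$. This margin is exactly what guarantees that the relative velocity between two points of $B(0,R)$ never wraps around the torus.
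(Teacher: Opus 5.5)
Your proof is correct and follows essentially the same route as the paper. Both arguments observe that the two fluxes vanish for $v$ outside the ball. For $|v|\le R$, both show that any shifted copy $B(0,R)+2Lj$ with $j\neq 0$ would force $\|q\|_\infty\geq 2L-2R>L$, contradicting $q\in[-L,L]^d$. Your componentwise bound is the same as the paper's $\|\cdot\|_\infty$ estimate. You also spell out two points the paper leaves implicit: that the $\mathbb{R}^d$ integral is already confined to $\{|q|\le 2R\}\subset[-L,L]^d$, and that both integrals are finite because $\gamma+2>-d$.
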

\begin{proof}
Since for any $v\in [-L,L]^d\setminus B(0,R)$ we have $f(v) = 0$ and $\nabla f(v) = 0$ , it holds $J(f,g) (v) = 0$. For the same reason we have $J_\per(f_\per, g_\per)(v) = 0$ for $v\in  [-L,L]^d\setminus B(0,R)$. 
For any point $v\in B(0,R)$, instead, we need to make sure that the periodic flux does not integrate over neighboring copies of $\supp(g)$. Consider the copies of the ball $B(0,R)$ given by
\[
B_\ell:=B(0,R)+2L\ell,\qquad \ell\in\mathbb Z^d.
\]
Suppose that $v-q\in B_\ell$ for some $\ell\neq0$ and $q\in[-L,L]^d$. Then, there exists $y\in B(0,R)$ such that
$
q=v-y-2L\ell.
$
Since $\|\ell\|_\infty\geq 1$, by the reverse triangle inequality we have
\[
\| q\|_\infty \geq 2L - \|v-y \|_\infty \geq 2L - 2R > L
\]
where the last inequality follows from $L>2R$. This contradicts $q\in[-L,L]^d$,  and therefore $J(f,g)(v) = J_\per(f_\per,g_\per)(v)$ for $v\in B(0,R)$ too.
\end{proof}

Next, we collect some stability estimates for $J_\per$.

\begin{lemma}\label{l:flux_stability}
Let $-d-1 \leq \gamma \leq 1$ and $f, g\in C^\infty(\Td_L)$.  There exist  $C(\gamma,d,L)$, $C(\gamma,d,L,s)>0$ such that
\begin{align*}
\|J_\per(f,g)\|_{C^0(\Td_L)} & \le\ C(\gamma,d, L)\,\|g\|_{C^1(\Td_L)}\,\|f\|_{C^1(\Td_L)} \\
\|J_\per(f,g)\|_{H^s(\Td_L)} &  \le\ C(\gamma,d,L,s)\,\|g\|_{H^{s+1}(\Td_L)}\,\|f\|_{H^{s+1}(\Td_L)} \,\qquad \textup{for} \;\; s>d/2\,.
\end{align*}

\end{lemma}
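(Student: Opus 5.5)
The plan is to rewrite the periodic flux \eqref{eq:Jper} as a sum of two products, each consisting of a periodic convolution with the Landau tensor and a derivative factor:
\[
J_\per(f,g)(v)=\big(A\ast_\per g\big)(v)\,\nabla f(v)-f(v)\,\big(A\ast_\per\nabla g\big)(v),
\qquad
(A\ast_\per h)(v):=\int_{[-L,L]^d}A(q)\,h(v-q)\,dq .
\]
Here $A$ acts componentwise as a matrix on the vectors $\nabla f$ and $\nabla g$. Both estimates then reduce to two facts. The first is that $A$ is integrable on the fundamental cell. The second is a pair of standard properties of periodic convolution and products: Young's inequality in $C^0$, and the algebra property of $H^s(\Td_L)$ for $s>d/2$.

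\textbf{Step 1: integrability of the kernel.} The matrix $I-\mu\mu^\top$ is an orthogonal projection, so its operator norm is at most $1$. Hence every entry satisfies $|A_{pq}(q)|\le|q|^{\gamma+2}$. In polar coordinates, $\int_{[-L,L]^d}|q|^{\gamma+2}\,dq\le C\int_0^{\sqrt d L}r^{\gamma+2+d-1}\,dr$. This is finite exactly when $\gamma+2+d>0$, and that condition holds for every $\gamma\ge-d-1$. It yields a constant $\|A\|_{L^1([-L,L]^d)}=:C_A(\gamma,d,L)<\infty$. This is the only place where the range of $\gamma$ enters, and it is the step I would check most carefully: the lower endpoint $\gamma=-d-1$ gives the integrand $r^{0}$, which is still integrable, so no singular-integral theory is needed.

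\textbf{Step 2: the $C^0$ bound.} Young's inequality on the torus gives $\|A\ast_\per h\|_{C^0}\le C_A\|h\|_{C^0}$. Applying this to $h=g$ and to $h=\nabla g$ gives
\[
\|J_\per(f,g)\|_{C^0}\le C_A\big(\|g\|_{C^0}\|\nabla f\|_{C^0}+\|f\|_{C^0}\|\nabla g\|_{C^0}\big)\le 2C_A\|g\|_{C^1}\|f\|_{C^1}.
\]

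\textbf{Step 3: the $H^s$ bound.} In Fourier variables, $\widehat{(A\ast_\per h)}_k=(2L)^d\hat A_k\hat h_k$, and $|\hat A_k|\le(2L)^{-d}C_A$. Therefore $\|A\ast_\per h\|_{H^s}\le C_A\|h\|_{H^s}$ for every $s\ge0$. Next I would invoke the algebra property: for $s>d/2$, $\|uw\|_{H^s(\Td_L)}\le C(d,L,s)\|u\|_{H^s}\|w\|_{H^s}$. This follows from the usual proof, which combines the weight inequality $(1+|k|^2)^{s/2}\lesssim(1+|k-m|^2)^{s/2}+(1+|m|^2)^{s/2}$ with Young's inequality for sequences and the summability of $\sum_k(1+\pi^2|k|^2/L^2)^{-s}$ for $2s>d$. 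The $L$-dependence of the weights is absorbed into the constant. Applying the algebra property componentwise to both products gives
\[
\|J_\per(f,g)\|_{H^s}\le C\big(C_A\|g\|_{H^s}\|\nabla f\|_{H^s}+\|f\|_{H^s}C_A\|\nabla g\|_{H^s}\big)\le C(\gamma,d,L,s)\|g\|_{H^{s+1}}\|f\|_{H^{s+1}}.
\]
The last inequality uses $\|\nabla h\|_{H^s}\le\|h\|_{H^{s+1}}$, which is immediate from the definition of the norm because $|\pi k/L|^2\le1+\pi^2|k|^2/L^2$.

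Apart from the integrability check in Step 1, the only point needing care is that the algebra constant is stated for the specific weighted norm used in the paper. Since all weights are comparable up to $L$-dependent constants, this affects only $C(\gamma,d,L,s)$.
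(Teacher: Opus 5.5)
Your proposal is correct and takes essentially the same route as the paper. The paper likewise uses the integrability of $|q|^{\gamma+2}$ on $[-L,L]^d$ (valid since $\gamma>-d-2$), Young's inequality for the periodic convolutions $A\ast g$ and $A\ast\nabla g$ to get the $C^0$ and $H^s$ bounds, and the Banach algebra property of $H^s(\Td_L)$ for $s>d/2$ applied to the products $(A\ast g)\nabla f$ and $f\,(A\ast\nabla g)$. Your Fourier-multiplier bound $|\hat A_k|\le(2L)^{-d}\|A\|_{L^1}$ and your sketch of the algebra property just make explicit what the paper cites from the literature.
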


\begin{proof} In the proof (and in subsequent ones) the constant $C>0$ may take different values, from line to line. We sometimes stress the dependencies as $C = C(\gamma,d,L)$.

By definition of $A$ we have $\| A(q)\|_2 = |q|^{\gamma+2}$ for $q \neq 0$. Therefore, since the integration domain is bounded and $\gamma> - d -2$, it holds
\[
\int_{[-L,L]^d} \|A(q)\|_2 dq  = \int_{[-L,L]^d} |q|^{\gamma+2} dq < C(\gamma,d,L)\,.
\]
Direct computations then lead to the first estimate
\begin{align*}
|J_\per(f,g) (v) | & \leq \int_{[-L,L]^d} \|A(q) \|_2 \left(  |g(v - q)| |\nabla f(v)|  + |f(v)||\nabla g (v -q) |\right) dq \\
& \leq C(\gamma,d,L) \left(\| g\|_{C^0(\Td_L)}\| \nabla f \|_{C^0(\Td_L)} + \| f\|_{C^0(\Td_L)}\| \nabla g\|_{C^0(\Td_L)} \right)  \\
& \leq C(\gamma,d,L) \|g\|_{C^1(\Td_L)}\,\|f\|_{C^1(\Td_L)} \,.
\end{align*}

Note that, thanks to the integrability of \(A(q)\) (and of its components \(A_{ij}\)) and Young’s inequality for periodic convolutions, we also have
\[
\|A*\nabla g\|_{H^s(\mathbb T_L^d)}
\leq C(\gamma,d,L)\|g\|_{H^{s+1}(\mathbb T_L^d)}
\]
and
\[
\|A_{ij}*g\|_{H^s(\mathbb T_L^d)}
\leq C(\gamma,d,L)\|g\|_{H^s(\mathbb T_L^d)}.
\]
For \(u,v\in H^s(\mathbb T_L^d)\), it holds
\[
\|uv\|_{H^s(\mathbb T_L^d)}
\leq
C(d,L,s)
\|u\|_{H^s(\mathbb T_L^d)}
\|v\|_{H^s(\mathbb T_L^d)},
\]
since \(H^s(\mathbb T_L^d)\) is a Banach algebra for \(s>d/2\) \cite[Theorem 4.39]{adams2003sobolev}.

Using these estimates, we obtain
\[
\begin{aligned}
\|(A_{ij}*g)\partial_jf\|_{H^s(\mathbb T_L^d)}
&\leq C(d,L,s)
\|A_{ij}*g\|_{H^s(\mathbb T_L^d)}
\|\partial_jf\|_{H^s(\mathbb T_L^d)}\\
&\leq C(\gamma,d,L,s)
\|g\|_{H^s(\mathbb T_L^d)}
\|\partial_jf\|_{H^s(\mathbb T_L^d)},
\end{aligned}
\]
and, analogously,
\[
\|f(A_{ij}*\partial_jg)\|_{H^s(\mathbb T_L^d)}
\leq C(\gamma,d,L,s)
\|f\|_{H^s(\mathbb T_L^d)}
\|\partial_jg\|_{H^s(\mathbb T_L^d)}.
\]
Summing over \(i,j=1,\dots,d\) yields the desired estimate in terms of the norms \(\|g\|_{H^{s+1}(\mathbb T_L^d)}\) and \(\|f\|_{H^{s+1}(\mathbb T_L^d)}\).
\end{proof}

We recall and collect some fundamental results 
regarding Fourier truncation errors. For any $f\in H^{s}(\Td_L)$, we have \cite[Theorem 2.1]{shen2011spectral}
\begin{equation} \label{eq:err_Hs}
\| f - \P_M f\|_{H^r(\Td_L)} \leq C(L,d,s) M^{r - s} \| f\|_{H^s(\Td_L)}\qquad \textup{for any} \;\; 0<r<s\,.
\end{equation}

\begin{lemma}
For \(f\in H^s(\Td_L)\) it holds:
\begin{align}\label{eq:err_C0}
    \|f-\P_Mf\|_{C^0(\Td_L)}
    &\leq
    C(L,s,d)M^{-(s-d/2)}
    \|f\|_{H^s(\Td_L)} &\textup{if}\quad  s>d/2\,, \\
\label{eq:err_C1}
    \|f-\P_Mf\|_{C^1(\Td_L)}
    &\leq
    C(L,s,d)M^{-(s-1-d/2)}
    \|f\|_{H^s(\Td_L)}  &\textup{if} \quad s>d/2+1\,.
\end{align}
\end{lemma}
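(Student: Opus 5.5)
We plan to prove the two estimates by working entirely on the Fourier side, using Parseval together with the Sobolev embedding on the torus $\Td_L$.

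For \eqref{eq:err_C0}, write the error as the tail of the Fourier series,
\[
f-\P_M f=\sum_{k\notin \llbracket -M/2,M/2-1\rrbracket^d}\hat f_k e^{i\pi k\cdot v/L}.
\]
This gives the pointwise bound
\[
|f(v)-\P_Mf(v)|\le \sum_{\|k\|_\infty\ge M/2}|\hat f_k|.
\]
We then insert the weights $(1+\pi^2|k|^2/L^2)^{\pm s/2}$ and apply Cauchy--Schwarz:
\[
\sum_{\|k\|_\infty\ge M/2}|\hat f_k|\le \Big(\sum_{\|k\|_\infty\ge M/2}(1+\tfrac{\pi^2|k|^2}{L^2})^{-s}\Big)^{1/2}\Big(\sum_k (1+\tfrac{\pi^2|k|^2}{L^2})^{s}|\hat f_k|^2\Big)^{1/2}.
\]
The second factor equals $(2L)^{-d/2}\|f\|_{H^s}$ by the definition of the norm.

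The first factor is where the main work lies. Since $|k|\ge \|k\|_\infty\ge M/2$, we compare the lattice sum with an integral over $\{|x|\ge M/2-\sqrt d\}$, or count lattice points on the shells $\|k\|_\infty=n$, of which there are $O(n^{d-1})$. This gives
\[
\sum_{\|k\|_\infty\ge M/2}|k|^{-2s}\le C(d,s)\sum_{n\ge M/2}n^{d-1-2s}\le C(d,s)M^{d-2s}.
\]
The series converges exactly when $2s>d$, which is the hypothesis $s>d/2$. Taking square roots yields $M^{-(s-d/2)}$, and the constants depend on $L$ through the factors $\pi/L$. This proves \eqref{eq:err_C0}.

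For \eqref{eq:err_C1}, we add the gradient term. Since $\P_M$ commutes with $\partial_j$, we have $\partial_j(f-\P_Mf)=\partial_jf-\P_M\partial_jf$, whose Fourier coefficients are $(i\pi k_j/L)\hat f_k$. Repeating the argument above with the extra factor $|k|$, we get
\[
\|\nabla (f-\P_Mf)\|_{C^0}\le C\sum_{\|k\|_\infty\ge M/2}|k||\hat f_k|\le C\Big(\sum_{\|k\|_\infty\ge M/2}|k|^{2-2s}\Big)^{1/2}\|f\|_{H^s}.
\]
The tail sum is at most $C M^{d+2-2s}$ provided $2s-2>d$, that is $s>d/2+1$. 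Equivalently, we can apply \eqref{eq:err_C0} to $\partial_j f\in H^{s-1}$ with exponent $s-1>d/2$, using $\|\partial_jf\|_{H^{s-1}}\le C\|f\|_{H^s}$. Either route gives $M^{-(s-1-d/2)}$.

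Finally, the $C^0$ part contributes $M^{-(s-d/2)}\le M^{-(s-1-d/2)}$, so it is absorbed into the gradient bound, which completes \eqref{eq:err_C1}.

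The only delicate point is the lattice tail estimate. We will handle it by the shell count together with the elementary bound $\sum_{n\ge M/2}n^{-1-\alpha}\le C(\alpha)M^{-\alpha}$ for $\alpha>0$. For \eqref{eq:err_C0} we use it with $\alpha=2s-d$; for \eqref{eq:err_C1} we use it with $\alpha=2s-d-2$.
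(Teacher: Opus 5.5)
Your proposal is correct and follows the paper's proof essentially step by step. You bound the sup-norm of the tail by $\sum_{k\notin K_M}|\hat f_k|$, apply Cauchy--Schwarz with the weights $(1+\pi^2|k|^2/L^2)^{\pm s/2}$, and repeat the argument with an extra factor $|k|$ for the $C^1$ estimate; the only differences are that you prove the lattice tail bound $\sum_{\|k\|_\infty\ge M/2}|k|^{-2s}\le C M^{d-2s}$ directly by shell counting where the paper cites it from the literature, and that you state explicitly that the $C^0$ part is absorbed because $M^{-(s-d/2)}\le M^{-(s-1-d/2)}$.
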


\begin{proof} We follow similar computations to
\cite[Lemma~6.3]{potts2021fourier}, where more general weighted Fourier truncation estimates are proved. Let $K_M :=\llbracket -M/2,M/2-1\rrbracket^d$. 
Since the Fourier basis functions have unit modulus, we have
\[
    \|f(v)-\P_Mf(v)\|_{C^0(\Td_L)}
= \left \|
    \sum_{k\notin K_M}
    \hat f_k
    \exp\left(\frac{i\pi k\cdot v}{L}\right)\right\|_{C^0(\Td_L)} \leq \sum_{k\notin K_M}
    |\hat f_k|\,.
\]
 By the Cauchy--Schwarz inequality we have
\begin{align*}
    \sum_{k\notin K_M}
    |\hat f_k|
    &=
    \sum_{k\notin K_M}
    \left(
        1+\frac{\pi^2|k|^2}{L^2}
    \right)^{-s/2}
    \left(
        1+\frac{\pi^2|k|^2}{L^2}
    \right)^{s/2}
    |\hat f_k|
    \\
    &\leq
    \left(
        \sum_{k\notin K_M}
        \left(
            1+\frac{\pi^2|k|^2}{L^2}
        \right)^{-s}
    \right)^{1/2}
    \left(
        \sum_{k\notin K_M}
        \left(
            1+\frac{\pi^2|k|^2}{L^2}
        \right)^s
        |\hat f_k|^2
    \right)^{1/2}.
\end{align*}
For $s>d/2$, the first factor can be bounded as
\[
    \sum_{k\notin K_M}
    \left(
        1+\frac{\pi^2|k|^2}{L^2}
    \right)^{-s}
    \leq
    C(L,s,d)M^{d-2s},
\]
see proof of \cite[Theorem 3.1]{gorner2012efficient}. The definition of the periodic Sobolev norm gives the bound for the second factor
\[
    \left(
        \sum_{k\notin K_M}
        \left(
            1+\frac{\pi^2|k|^2}{L^2}
        \right)^s
        |\hat f_k|^2
    \right)^{1/2}
    \leq
    (2L)^{-d/2}\|f\|_{H^s(\Td_L)}.
\]
The two estimates together lead to \eqref{eq:err_C0}. 

The same argument leads to the estimate for the $C^1$ norm \eqref{eq:err_C1}. The only difference is that, when taking the derivative, we introduce an additional factor of $|k|$ in the Fourier sum, reducing the decay order in $M$ from $s-d/2$ to $s-d/2-1$.
\end{proof}

With the derived error estimates we show the spectral accuracy of the flux approximation.

\begin{proposition}\label{p:J_error}
Let $-d-1 \leq \gamma \leq 1$ and  $f\in C^\infty(\Rd)$ be compactly supported in $B(0,R)$.
Fix $L>2R$. 
If $s>d/2+1$, then there exists $C(\gamma,d,R,L,s)>0$ such that
\[
\|J_M(\P_M f,\P_M f)-J(f,f)\|_{C^0([-L,L]^d)}
\le
C(\gamma,d,R,L,s)
M^{-(s-d/2-1)}
\|f\|_{H^s(\mathbb R^d)}^2\, .
\]

\end{proposition}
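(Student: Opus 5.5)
By Lemma~\ref{l:truncation}, $J(f,f)=J_\per(f_\per,f_\per)$ on $[-L,L]^d$, so we may work entirely on the torus with $f=f_\per$. We split the error into a projection error and a perturbation error:
\[
J_M(\P_M f,\P_M f)-J(f,f)
= \underbrace{\P_M J_\per(\P_M f,\P_M f)-\P_M J_\per(f,f)}_{E_1}
+\underbrace{\P_M J_\per(f,f)-J_\per(f,f)}_{E_2}.
\]
We then bound the two terms separately in $C^0(\Td_L)$.

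\textbf{The term $E_2$.} This is a pure Fourier truncation error. By \eqref{eq:err_C0} applied with some $r>d/2$,
\[
\|E_2\|_{C^0}\le C M^{-(r-d/2)}\|J_\per(f,f)\|_{H^r}.
\]
By the second estimate of Lemma~\ref{l:flux_stability}, $\|J_\per(f,f)\|_{H^r}\le C\|f\|_{H^{r+1}}^2$. Choosing $r=s-1>d/2$ gives $\|E_2\|_{C^0}\le CM^{-(s-d/2-1)}\|f\|_{H^s}^2$, which is exactly the claimed rate. This is why the hypothesis $s>d/2+1$ is needed.

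\textbf{The term $E_1$.} Using bilinearity of $J_\per$,
\[
J_\per(\P_M f,\P_M f)-J_\per(f,f)=J_\per(\P_M f-f,\P_M f)+J_\per(f,\P_M f-f).
\]
The difficulty is that $\P_M$ is not bounded on $C^0$: the Dirichlet kernel has Lebesgue constant growing like $(\log M)^d$. So we cannot simply combine the first estimate of Lemma~\ref{l:flux_stability} with \eqref{eq:err_C1}. Instead, we bound $\|\P_M h\|_{C^0}\le \sum_k|\hat h_k|\le C\|h\|_{H^{\sigma}}$ for any $\sigma>d/2$ (Cauchy--Schwarz, as in the proof of \eqref{eq:err_C0}). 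Taking $\sigma=s-1$ and using the $H^{s-1}$ estimate of Lemma~\ref{l:flux_stability},
\[
\|E_1\|_{C^0}\le C\big(\|\P_M f-f\|_{H^s}\|\P_M f\|_{H^s}+\|f\|_{H^s}\|\P_M f-f\|_{H^s}\big).
\]
This loses the rate, because $\|\P_M f-f\|_{H^s}$ carries no decay factor in $M$. The fix is to exploit the smoothness of $f\in C^\infty$ by working one level higher. Replacing $s$ by $s'=s+1$ in the bilinear estimate, \eqref{eq:err_Hs} gives $\|\P_M f-f\|_{H^{s}}\le CM^{-1}\|f\|_{H^{s+1}}$, which is still not the right norm.

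A cleaner route is to measure the perturbation in a weaker norm, $H^{\sigma}$ with $\sigma=d/2+\delta$. The bilinear bound then requires $\|\P_M f-f\|_{H^{\sigma+1}}\le CM^{\sigma+1-s}\|f\|_{H^s}$, which has order $M^{-(s-d/2-1-\delta)}$. The loss of $\delta$ can be removed by using the sharper bound
\[
\|J_\per(h,g)\|_{C^0}\le C\big(\|h\|_{C^1}\|g\|_{C^0}+\|h\|_{C^0}\|g\|_{C^1}\big)
\]
together with summing Fourier coefficients of the product directly. Concretely, one writes $\widehat{J_\per(h,g)}_k=\sum_m \hat h_{k-m}\hat g_m\hat\beta(k-m,m)$ and notes that $|\hat\beta(l,m)|\le C(|l|+|m|)$, since $\int\|A\|<\infty$ bounds $D$ and $I$. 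The $\ell^1$ norm of the modes is then at most $C\big(\sum|l||\hat h_l|\sum|\hat g_m|+\sum|\hat h_l|\sum|m||\hat g_m|\big)$. The weighted $\ell^1$ sums are controlled by $H^s$ norms through Cauchy--Schwarz, exactly as in the proof of \eqref{eq:err_C0}/\eqref{eq:err_C1}. Applying this with $h=\P_M f-f$ yields $\sum_l(1+|l|)|\hat h_l|\le CM^{-(s-d/2-1)}\|f\|_{H^s}$, while $\sum(1+|m|)|\hat g_m|\le C\|f\|_{H^s}$ for $g\in\{f,\P_M f\|$ replaced by $\{f,\P_M f\}$.

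This Fourier-side $\ell^1$ argument handles both $E_1$ and $E_2$ with the exact exponent. I expect it to be the main technical step: obtaining the bound on $\hat\beta$ and checking that $\P_M$ only removes modes, hence does not increase $\ell^1$ norms. Finally, $\|f\|_{H^s(\Rd)}=\|f_\per\|_{H^s(\Td_L)}$ by the definition of the norm.
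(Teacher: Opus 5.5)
Your final argument is correct, but it differs from the paper's in where you insert the intermediate term.

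The paper adds and subtracts $J_\per(\P_M f,\P_M f)$. This splits the error into $(\P_M-I)J_\per(\P_M f,\P_M f)$ and $J_\per(\P_M f,\P_M f)-J_\per(f,f)$.
\begin{itemize}
\item The first piece is handled exactly like your $E_2$: \eqref{eq:err_C0} at order $s-1$, the $H^{s-1}$ bound of Lemma~\ref{l:flux_stability}, and $\|\P_M f\|_{H^s}\le\|f\|_{H^s}$.
\item The second piece carries no projection, so bilinearity, the $C^0$ bound of Lemma~\ref{l:flux_stability}, \eqref{eq:err_C1} and Sobolev embedding finish the proof.
\end{itemize}
The Lebesgue-constant obstruction you run into is therefore self-inflicted by choosing $\P_M J_\per(f,f)$ as the intermediate term.

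Your repair is sound. You bound $\sum_k|\widehat{J_\per(h,g)}_k|\le C\|h\|_{\mathcal A}\|g\|_{\mathcal A}$, where $\|u\|_{\mathcal A}:=\sum_k(1+|k|)|\hat u_k|$, using the mode formula and $|\hat\beta(l,m)|\le \frac{\pi}{L}|l-m|\int_{[-L,L]^d}|q|^{\gamma+2}\,dq$. The details check out:
\begin{itemize}
\item The mode expansion extends to smooth non-polynomial $h,g$ by Fubini, because the double series converges absolutely.
\item The bound is symmetric in $h,g$, so the swapped roles in $J_\per(f,\P_M f-f)$ cause no trouble.
\item The tail $\sum_{|l|_\infty\ge M/2}(1+|l|)|\hat f_l|\le CM^{-(s-d/2-1)}\|f\|_{H^s}$ is exactly the Cauchy--Schwarz computation behind \eqref{eq:err_C1}.
\end{itemize}

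What your route buys is a stronger bilinear estimate, in a norm that $\P_M$ cannot increase; it would even give $E_2$ directly. What the paper's split buys is that nothing beyond the already-stated lemmas is needed.

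In a write-up, drop the abandoned attempts: the $H^{\sigma}$ route with $\delta$-loss, the shift to $s+1$, and the pointwise $C^0$ bound you never use. Present the $\ell^1$ argument directly.
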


\begin{proof}
As before, we denote by $f_{\per}\in C^\infty(\Td_L)$ the periodic extension of $f$ so that we have $f^M=\P_M f_{\per}.$
We split the error as
\begin{align*}
\|J_M(f^M,f^M)-J(f,f)\|_{C^0([-L,L]^d)}
&\leq 
\|\P_M J_{\per}(f^M,f^M)-J_{\per}(f^M,f^M)\|_{C^0(\Td_L)} \\
& \qquad+ \|J_{\per}(f^M,f^M)-J_{\per}(f_{\per},f_{\per})\|_{C^0(\Td_L)}  
\\
&  \qquad + \|J_{\per}(f_{\per},f_{\per}) -J(f,f)\|_{C^0([-L,L]^d)}  =: I_1 + I_2 + I_3\,.
\end{align*}
Since $L>2R$, by Lemma~\ref{l:truncation} we have $I_3 = 0$.
For the term $I_1$, we apply the projection estimate \eqref{eq:err_C0} and Lemma~\ref{l:flux_stability} with exponent $s-1>d/2$
\begin{align*}
I_1
&\leq
C(L,d,s)M^{-(s-1-d/2)}
\|J_{\per}(f^M,f^M)\|_{H^{s-1}(\Td_L)}  \\
& \leq
C(\gamma,d,L,s)M^{-(s-1-d/2)} \|f^M\|_{H^s(\Td_L)}^2 \,.
\end{align*}

Since $\P_M$ is the Fourier projection, it is bounded in $H^s(\Td_L)$, and therefore
$
\|f^M\|_{H^s(\Td_L)}
=
\|\P_M f_{\per}\|_{H^s(\Td_L)}
\le
\|f \|_{H^s(\Rd)} .
$
Consequently,
\[
I_1
\le
C(\gamma, d,L,s)M^{-(s-d/2-1)}
\|f\|_{H^s(\Rd)}^2 .
\]

To estimate the term $I_2$, we start by using the bilinearity of $J_{\per}$ to bound it as
\begin{align*}
I_2 &  =  
\| J_{\per}(f^M,f^M) \pm  J_{\per}(f_\per,f^M)  -J_{\per}(f_{\per},f_{\per})\|_{C^0(\Td_L)}
\\
& \leq  
\| J_{\per}(f^M-f_{\per},f^M) \|_{C^0(\Td_L)}
+
\| J_{\per}(f_{\per},f^M-f_{\per})  \|_{C^0(\Td_L)} \\
&\le
C(\gamma, d,L)
\|f^M-f_{\per}\|_{C^1(\Td_L)}
\left(
\|f^M\|_{C^1(\Td_L)}
+
\|f_{\per}\|_{C^1(\Td_L)}
\right)\,,
\end{align*}
where in the last step we used the $C^0$ stability estimate in Lemma~\ref{l:flux_stability}.
By the projection estimate \eqref{eq:err_C1},
\[
\|f^M-f_{\per}\|_{C^1(\Td_L)}
=
\|\P_M f_{\per}-f_{\per}\|_{C^1(\Td_L)}
\le
C(L,d,s)
M^{-(s-1-d/2)}
\|f\|_{H^s(\Td_L)}\,  .
\]
Since $s>d/2+1$, by the Sobolev embedding theorem we also have 
$
\|f_{\per}\|_{C^1(\Td_L)}
\le
C(L,d,s)\|f_{\per}\|_{H^s(\Td_L)}
$, 
and, as a consequence, 
$
\|f^M\|_{C^1(\Td_L)}
\le
\|f_{\per}\|_{C^1(\Td_L)}
+
\|f^M-f_{\per}\|_{C^1(\Td_L)}
\le
C(L,d,s)\|f\|_{H^s(\Td_L)}
$
for $M\ge 1$. 
Therefore, also for the term $I_2$ we obtained the desired estimate
\[
I_2
\le
C(\gamma, d,L,s)
M^{-(s-d/2-1)}
\|f_{\per}\|_{H^s(\Td_L)}^2\, .
\]
\end{proof}

\subsection{Error in the velocity field}

Finally, we apply the above results to quantify the error between
$U_M[f^N]$ and $U[f]$. 

\begin{proof}[Proof of Theorem \ref{t:main}]
Since $\supp(f) \subset [-L,L]^d$, $f = f_\per$, and we will use them interchangeably.  By assumption,
$
\|f_M^N-f \|_{C^0(\mathbb T_L^d)}
\leq  \kappa /2,
$ and for every
$v\in\Omega_\kappa$ we have
\[
f_M^N(v)
\geq
f(v)-|f_M^N(v)-f(v)|
>
\frac{\kappa}{2}.
\]
Since $\underline f\leq\kappa/2$, it follows that $\max\{f_M^N(v),\underline f\}=f_M^N(v)$, that is,  the flooring operation is inactive for every $v\in\Omega_\kappa$.

By using the above lower bounds, we have
\begin{align*}
|U_M[f_M^N](v)-U[f](v)|
&=
\left|
\frac{J_M(f_M^N,f_M^N)(v)}{f_M^N(v)}
\pm 
\frac{J(f,f)(v)}{f_M^N(v)}
-
\frac{J(f,f)(v)}{f(v)}
\right|
\\
&\leq
\frac{
|J_M(f_M^N,f_M^N)(v)-J(f,f)(v)|
}{
|f_M^N(v)|
}
+
|J(f,f)(v)|
\left|
\frac{1}{f_M^N(v)}-\frac{1}{f(v)}
\right|
\\
&\leq
\frac{2}{\kappa}
|J_M(f_M^N,f_M^N)(v)-J(f,f)(v)|
+
\frac{2}{\kappa^2}
|J(f,f)(v)|
|f_M^N(v)-f(v)|.
\end{align*}
In the last step, we  used that for all $v\in\Omega_\kappa$,
\[
\left|
\frac{1}{f_M^N(v)}-\frac{1}{f(v)}
\right|
=
\frac{|f_M^N(v)-f(v)|}
{|f_M^N(v)||f(v)|}
\leq
\frac{2}{\kappa^2}
|f_M^N(v)-f(v)|\, .
\]
By taking the supremum, we obtain
\begin{multline}
\label{eq:Uest1}
\|U_M[f_M^N]-U[f]\|_{C^0(\Omega_\kappa)}
\leq
\frac{2}{\kappa}
\|J_M(f_M^N,f_M^N)-J(f,f)\|_{C^0(\Omega_\kappa)}
\\
+
\frac{2}{\kappa^2}
\|J(f,f)\|_{C^0(\Omega_\kappa)}
\|f_M^N-f\|_{C^0(\Omega_\kappa)}.
\end{multline}

To isolate the error given by the spectral approximation and the
density reconstruction, we work with the periodic flux. Recall
$f_M^N=\mathcal{P}_M f_M^N$ and 
$
J_M(f_M^N,f_M^N)
=
\mathcal{P}_M J_{\mathrm{per}}(f_M^N,f_M^N).
$
Since $f$ is supported in $B(0,R)$ and $L>2R$,
Lemma~\ref{l:truncation} gives
\[
J(f,f)
=
J_{\mathrm{per}}(f_{\mathrm{per}},f_{\mathrm{per}})
\qquad\text{on} \quad [-L,L]^d.
\]
We can therefore split the flux error as
\begin{multline}
\label{eq:Jest}
\|J_M(f_M^N,f_M^N)-J(f,f)\|_{C^0(\Omega_\kappa)}
\leq
\|
(\mathcal{P}_M-I)
J_{\mathrm{per}}(f_M^N,f_M^N)
\|_{C^0(\mathbb T_L^d)}
\\
+
\|
J_{\mathrm{per}}(f_M^N,f_M^N)
-
J_{\mathrm{per}}(f_{\mathrm{per}},f_{\mathrm{per}})
\|_{C^0(\mathbb T_L^d)}.
\end{multline}

For the first term, we apply the projection estimate
\eqref{eq:err_C0} with exponent $s-1>d/2$ and
Lemma~\ref{l:flux_stability} to obtain
\begin{align*}
\|
(\mathcal{P}_M-I)
J_{\mathrm{per}}(f_M^N,f_M^N)
\|_{C^0(\mathbb T_L^d)}
& \leq
C(d,L,s)
M^{-(s-d/2-1)}
\|J_{\mathrm{per}}(f_M^N,f_M^N)\|_{H^{s-1}(\mathbb T_L^d)}
\\
&\leq
C(\gamma,d,L,s)
M^{-(s-d/2-1)}
\|f_M^N\|_{H^s(\mathbb T_L^d)}^2.
\end{align*}
From the assumption
$
\|f_M^N-f\|_{H^s(\mathbb T_L^d)}
\leq 1,
$
we have
$
\|f_M^N\|_{H^s(\mathbb T_L^d)}
\leq
\|f \|_{H^s(\mathbb T_L^d)}+1.
$
Therefore,
\[
\|
(\mathcal{P}_M-I)
J_{\mathrm{per}}(f_M^N,f_M^N)
\|_{C^0(\mathbb T_L^d)}
\leq
C
M^{-(s-d/2-1)},
\]
where
$
C=C\left(\gamma, d,L,s,
\|f \|_{H^s}\right).
$

For the second term, we use the bilinearity of
$J_{\mathrm{per}}$ and apply Lemma~\ref{l:flux_stability} to obtain
\begin{align*}
\|
J_{\mathrm{per}}(f_M^N,f_M^N)
- &
J_{\mathrm{per}}(f_{\mathrm{per}},f_{\mathrm{per}})
\|_{C^0(\mathbb T_L^d)}
\\ & \leq
\|
J_{\mathrm{per}}(f_M^N-f_{\mathrm{per}},f_M^N)
\|_{C^0(\mathbb T_L^d)}
+
\|
J_{\mathrm{per}}(f_{\mathrm{per}},f_M^N-f_{\mathrm{per}})
\|_{C^0(\mathbb T_L^d)}
\\
&\leq
C(\gamma, d,L)
\|f_M^N-f_{\mathrm{per}}\|_{C^1(\mathbb T_L^d)}
\left(
\|f_M^N\|_{C^1(\mathbb T_L^d)}
+
\|f_{\mathrm{per}}\|_{C^1(\mathbb T_L^d)}
\right).
\end{align*}
Using again
$
\|f_M^N-f_{\mathrm{per}}\|_{H^s(\mathbb T_L^d)}
\leq 1
$
and Sobolev embedding $H^s(\Td_L) \hookrightarrow C^1(\Td_L)$ for $s>d/2 +1$, we have
\[
\|f_M^N\|_{C^1(\mathbb T_L^d)}
\leq
C(d,L,s)
\left(
\|f_{\mathrm{per}}\|_{H^s(\mathbb T_L^d)}+1
\right).
\]
Therefore,
\[
\|
J_{\mathrm{per}}(f_M^N,f_M^N)
-
J_{\mathrm{per}}(f_{\mathrm{per}},f_{\mathrm{per}})
\|_{C^0(\mathbb T_L^d)}
\leq
C\,
\|f_M^N-f_{\mathrm{per}}\|_{C^1(\mathbb T_L^d)},
\]
where
$
C=C\left(\gamma, 
d,L,s,\|f\|_{H^s}
\right).
$

Altogether, from \eqref{eq:Jest} we have obtained
\begin{equation}
\label{eq:Jest1}
\|J_M(f_M^N,f_M^N)-J(f,f)\|_{C^0([-L,L]^d)}
\leq
C\left(
M^{-(s-d/2-1)}
+
\|f_M^N-f_{\mathrm{per}}\|_{C^1(\mathbb T_L^d)}
\right).
\end{equation}

We turn now to estimating the term depending on $1/\kappa^2$ on
the right-hand side of \eqref{eq:Uest1}. Similarly, we use again
Lemma \ref{l:flux_stability} to obtain
\[
\|J(f,f)\|_{C^0([-L,L]^d)}
\leq
C(\gamma, d,L)
\|f_{\mathrm{per}}\|_{C^1(\mathbb T_L^d)}^2
\leq
C\left(\gamma,
d,L,s,\|f\|_{H^s(\mathbb T_L^d)}
\right)\,,
\]
leading to 
\begin{equation}
\label{eq:Jest2}
\frac{2}{\kappa^2}
\|J(f,f)\|_{C^0(\Omega_\kappa)}
\|f_M^N-f\|_{C^0(\Omega_\kappa)}
\leq
\frac{C}{\kappa^2}
\|f_M^N-f_{\mathrm{per}}\|_{C^0(\mathbb T_L^d)}.
\end{equation}

By combining \eqref{eq:Jest1} and \eqref{eq:Jest2} in
\eqref{eq:Uest1}, we obtain for some $C = C(\gamma, d,s,R,L,\|f\|_{H^s(\mathbb T_L^d)})$
\begin{equation*}
\|U_M[f_M^N]-U[f]\|_{C^0(\Omega_\kappa)}
\leq C\left(
\frac{1}{\kappa}
M^{-(s-d/2-1)}
+
\frac{1}{\kappa}
\|f_M^N-f_{\mathrm{per}}\|_{C^1(\mathbb T_L^d)}
+
\frac{1}{\kappa^2}
\|f_M^N-f_{\mathrm{per}}\|_{C^0(\mathbb T_L^d)}
\right).
\end{equation*}
We note that  $\|\cdot\|_{C^0} \leq \|\cdot\|_{C^1}$ and  $1/\kappa \leq \|f\|_{C^0(\Rd)}/\kappa^2$ given that $\kappa$ is necessarily bounded above by  $\sup_v |f(v)| =  \|f\|_{C^0(\Rd)}$. Therefore,  the last two terms can both be controlled by 
\[
\frac{1}{\kappa}
\|f_M^N-f_{\mathrm{per}}\|_{C^1(\mathbb T_L^d)}
+
\frac{1}{\kappa^2}
\|f_M^N-f_{\mathrm{per}}\|_{C^0(\mathbb T_L^d)}
\leq \frac{C}{\kappa^2} \|f_M^N-f_{\mathrm{per}}\|_{C^1(\mathbb T_L^d)}
\]
leading to the desired estimate.
\end{proof}

\section{Numerical examples}
\label{sec:num}

Through different numerical experiments we validate the theoretical analysis and compare the accuracy and efficiency of the proposed spectral particle method, Algorithm \ref{alg}, against a baseline algorithm, the blob method type II
 \cite{carrillo2020particle}. Code reproducing the results is available at \url{https://github.com/borghig/fast-spectral-particle-landau}.

We will consider three 2D benchmark tests for the homogeneous Landau equation \eqref{eq:landau} with  Maxwellian interactions ($\gamma = 0$). In Section \ref{sec:bkw} we consider the exact BKW solution, see \cite[Appendix A]{carrillo2020particle}, for which the exact definition of both $f(t,v)$ and the flux $J(f,f)(t,v)$ is available. In Section \ref{sec:trubnikov} we consider the Trubnikov relaxation test \cite{trubnikov1965particle} where, in an initially anisotropic distribution, the energy difference across dimensions should decrease at a prescribed rate. In Section \ref{sec:bump} we perform a bump-on-tail test where 10\% of the initial mass is displaced in a sharp bulk away from the main distribution. Since in the BKW solution the collisional operator is assumed to be multiplied by $C = 1/16$, we use this time-rescaling factor in all tests.

In all the experiments, we will consider $N = N_{\mathrm{side}}^2$ particles initialized on a grid over $[-v_{\max}, v_{\max}]^2$. At $t = 0$, particles are therefore equispaced with $h =  2 v_{\max}/(N_{\mathrm{side}}-1)$, and the initial distribution is set to be
\begin{equation} \label{eq:fNin}
f^N(0,v) = \sum_{i=1}^N w_i \delta(v - v_i(0))\,,\qquad \textup{with} \quad  \sum_{i=1}^N w_i= 1 \;, \quad  w_i \propto h^2 f(0,v_i(0))\,.
\end{equation}
The Fourier computational domain is the torus $\Td_L = [-L,L]^d$ with $L = 10$.  This choice makes aliasing and truncation effects negligible for the considered initial distribution. 
 Non-Uniform Fourier transformations are performed using FINUFFT CPU algorithms from \cite{barnett2019parallel} with default tolerance $\epsilon = 10^{-13}$. 

Time integration is performed with either the explicit Euler method or RK4 with time step $\Delta t  = 10^{-2}$ or $\Delta t  = 10^{-3}$, depending on the number of particles and modes $M$ used. 
The floor parameter in \eqref{eq:UM} is set to $\underline{f} = 10^{-13}$. For the blob method, we will fix $\Delta t  = 10^{-2}$,  Euler integration, and  $\ve = 0.64\cdot h^2$, as suggested in \cite{carrillo2020particle}.

To compute errors and diagnostic quantities we employ a fixed grid made of $120\times 120$ points over $[-L,L]^2$ at which  the reconstructed density $f^N_M$ is evaluated using inverse NUFFT (Type II). This grid is shared across all simulations and baseline algorithms for better comparison.

\subsection{BKW test}
\label{sec:bkw}

The derivation of the 2D BKW solution for the homogeneous Landau equation can be found in \cite[Appendix A]{carrillo2020particle}. The solution is radially symmetric and with a ring-shaped initial distribution $f(0,v) = |v|^2 e^{-|v|^2}/\pi$ with maxima at $|v| = 1$.

\subsubsection*{Spectral accuracy at $t = 0$}

To isolate the error given by the spectral approximation from the KDE one, we test the accuracy of the fast flux computations at time $t = 0$. Indeed, on the initialization grid $f^N(0,v)$ already provides a good approximation of the exact initial data $f(0,v)$, and so does its truncated Fourier density estimation $\P_M f^N$.
Let $v_{\max} = L = 10$ and $N_{\mathrm{side}} = 100$. We consider different numbers of modes per dimension $M = 32, 48, 64, 128$ and compute the truncated Fourier approximations $f^M = \P_M f$, $J_M(f,f)$, and $U_M[f^N]$ given by \eqref{eq:UM}. 

Figure \ref{fig:t0} illustrates the accuracy in terms of relative $L^2$ and $L^\infty$ errors. The polynomial approximation $f^N_M$ of $f$ is spectrally accurate. 
As expected from the theory, in particular Proposition \ref{p:J_error}, the spectral approximation of the flux $J_M(f^N_M,f^N_M)$ is also spectrally accurate.
The reconstructed velocity vector field $U_M[f^N]$ is also spectrally accurate over the sets $\Omega_\kappa = \{v \,|\, f(v) >\kappa>0\}$. The error constant, though, deteriorates as $\kappa\to 0$, in accordance with the statement of Theorem \ref{t:main}.

\begin{figure}
\includegraphics[width = 1\linewidth]{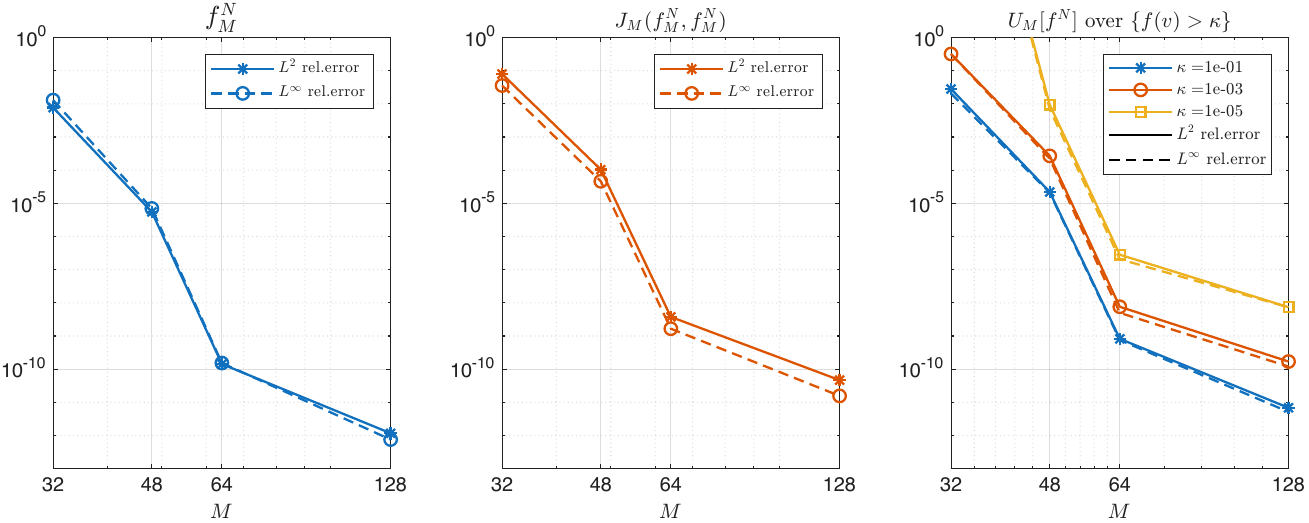}
\caption{
BKW test. At $t  = 0$,  the Fourier KDE $f_M^N$ provides a good approximation of the initial data. This translates into a spectrally accurate approximation $J_M(\P_M f^N,\P_M f^N)$ of the flux $J(f,f)$ at $t = 0$ in accordance with Proposition \ref{p:J_error}. The vector field $U_M[f^N]$ is also spectrally accurate in areas of the domain with strictly positive mass, as in  Theorem \ref{t:main}. Particles are initialized with $N_{\mathrm{side}} = 100$ ($N = 10^4$) and $v_{\max} = L =10$. 
}
\label{fig:t0}
\end{figure}

\begin{figure}
\centering
\begin{subfigure}{\linewidth}
    \centering
    \includegraphics[width=\linewidth]{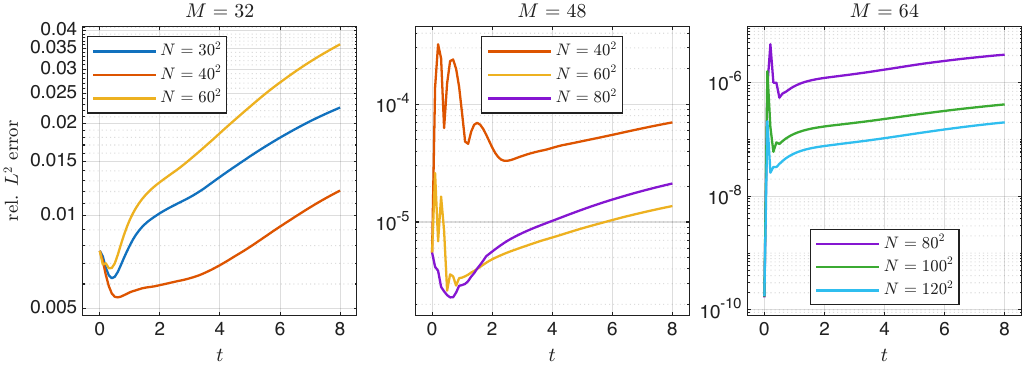}
    \caption{Error for $t \in [0,8]$.}
    \label{fig:comparisonMvsN_time}
\end{subfigure}

\smallskip

\begin{subfigure}{\linewidth}
    \centering
    \includegraphics[width=\linewidth]{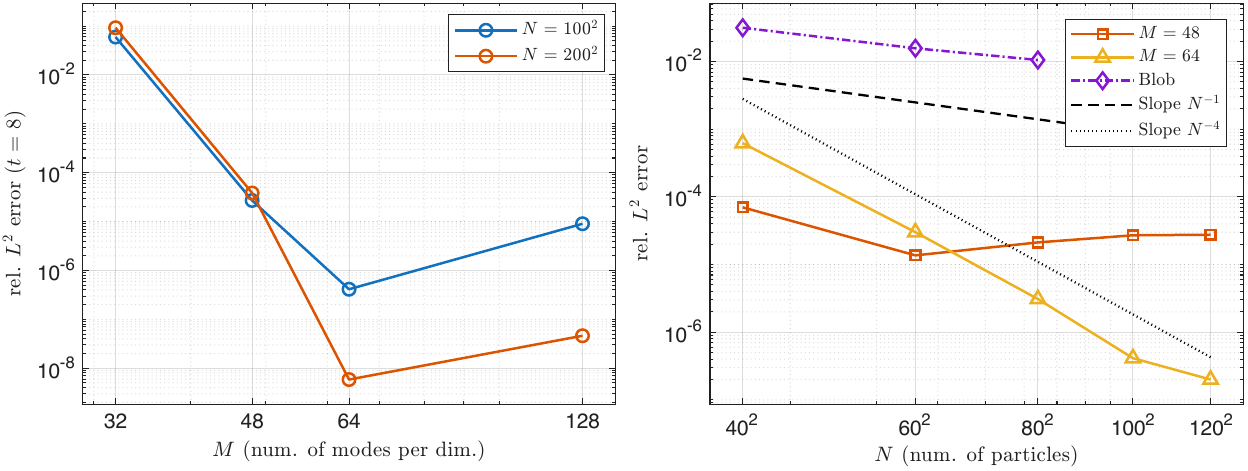}
    \caption{Error at $t = 8$.}
    \label{fig:comparisonMvsN_fixed}
\end{subfigure}

\caption{BKW test. Relative $L^2$ error for different numbers of particles $N$
and modes $M$ per dimension. Simultaneously increasing $N$ and $M$
allows the spectral particle method to achieve high accuracy.
If the number of modes is insufficient, increasing the number of particles may not decrease the error. To isolate the effects of $N$ and $M$, we use $\Delta t = 10^{-3}$ with RK4 time integration for the spectral particle method. For the blob method 
$\Delta t = 10^{-2}$ with Euler time integration suffices to isolate the same effects. The error is computed here on a grid of $200\times 200$ points over $[-10,10]^2$.}
\label{fig:comparisonMvsN}
\end{figure}

\begin{figure}[t]
\includegraphics[width = \linewidth]{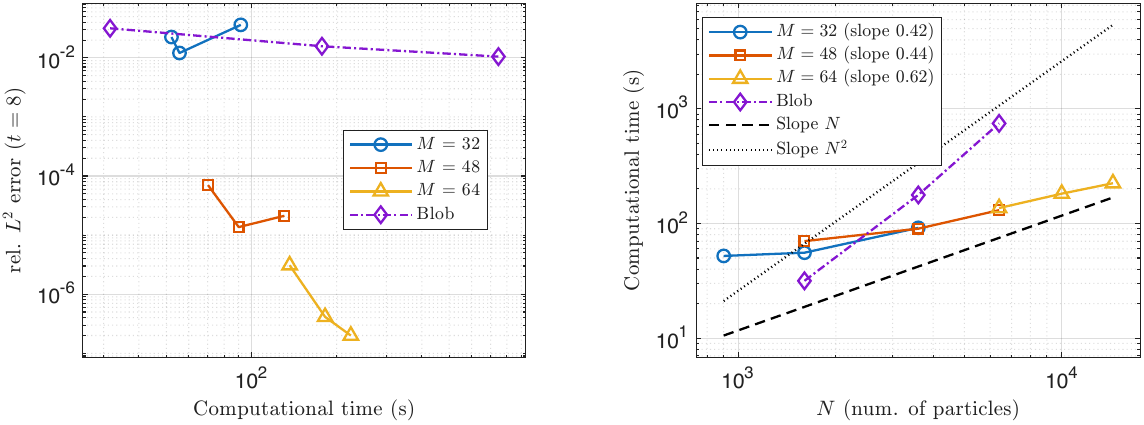}
\caption{BKW test. Accuracy and efficiency of the spectral particle and blob methods for the BKW solution at time $t = 8$ with wall-clock runtime. Different numbers of modes $M = 32, 48, 64$ and particles $N$ are considered (same as Figure \ref{fig:comparisonMvsN_time}). Over the tested range, the spectral particle method shows sub-linear time with respect to $N$ and higher accuracy. From $N> 3\cdot 10^{3}$, the computational time is lower despite using $\Delta t = 10^{-3}$ and RK4 integration (for the blob method, $\Delta t  = 10^{-2}$ with Euler).
 Runtime measurements were performed on a MacBook Air with an Apple M2 processor and 8 GB unified memory, running MATLAB R2026a on macOS 26.6. 
    The FINUFFT CPU package \cite{barnett2019parallel} is used for the NUFFT computations.
}
\label{fig:comparisonMvsN_cost}
\end{figure}

\subsubsection*{Accuracy at $t>0$}

As mentioned, $f^N_M$ might not be spectrally accurate at $t>0$ due to the distortion that the flow induces in the particle grid initialization. 
Figure \ref{fig:comparisonMvsN_time} illustrates the relative $L^2$ error of $f^N_M$ with respect to the exact solution over the time window $[0,8]$. 
To investigate the role of $N$ and $M$ we perform the same experiment with different numbers of particles and modes $M = 32, 48, 64$ and set $\Delta t  = 10^{-3}$ with RK4 time integration. 
Figure \ref{fig:comparisonMvsN_fixed} instead shows the convergence at $t = 8$ as $M$ increases (left plot) and as $N$ increases (right plot). Comparison with the baseline blob method is also included in the latter.

With already $M = 48$ modes per dimension and $N = 60^2 = 3.6\cdot 10^{3}$ particles the method is able to achieve an accuracy of order $10^{-5}$. Interestingly, increasing the number of particles to $N = 80^2 = 6.4\cdot 10^{3}$ does not improve the solution. The same happens with $M = 32$ and $N = 60^2$. When a larger number of modes $M = 64$ is used, instead, the solution improves as $N$ increases, reaching an accuracy of order $10^{-7}$ with $N = 120^2 = 1.44 \cdot 10^4$ particles. Particles are initialized with $v_{\max} = 5$.

\subsubsection*{Computational cost}

In addition to the high-accuracy solutions, the spectral particle method benefits from low computational cost $\mathcal{O}(N + M^d \log M)$ thanks to the implementation via NUFFT algorithms.  Wall-clock time of the computations against the $L^2$ error is shown in Figure \ref{fig:comparisonMvsN_cost}, left panel, while the right panel shows how the computational time increases with respect to the number of particles $N$. The experiments are the same as the ones in Figure \ref{fig:comparisonMvsN} and are compared with the blob method with $N = \{40^2, 60^2, 80^2\}$, $\Delta t = 10^{-2}$, Euler integration. The spectral particle method is able to achieve a high accuracy with a lower computational time,
despite using $\Delta t  = 10^{-3}$ and the RK4 method. Moreover, the
 wall-clock time scales below the linear theoretical bound $\mathcal{O}(N)$ with respect to the number of particles, instead of quadratically as $\mathcal{O}(N^2)$.

\begin{figure}[t]
\includegraphics[width = \linewidth]{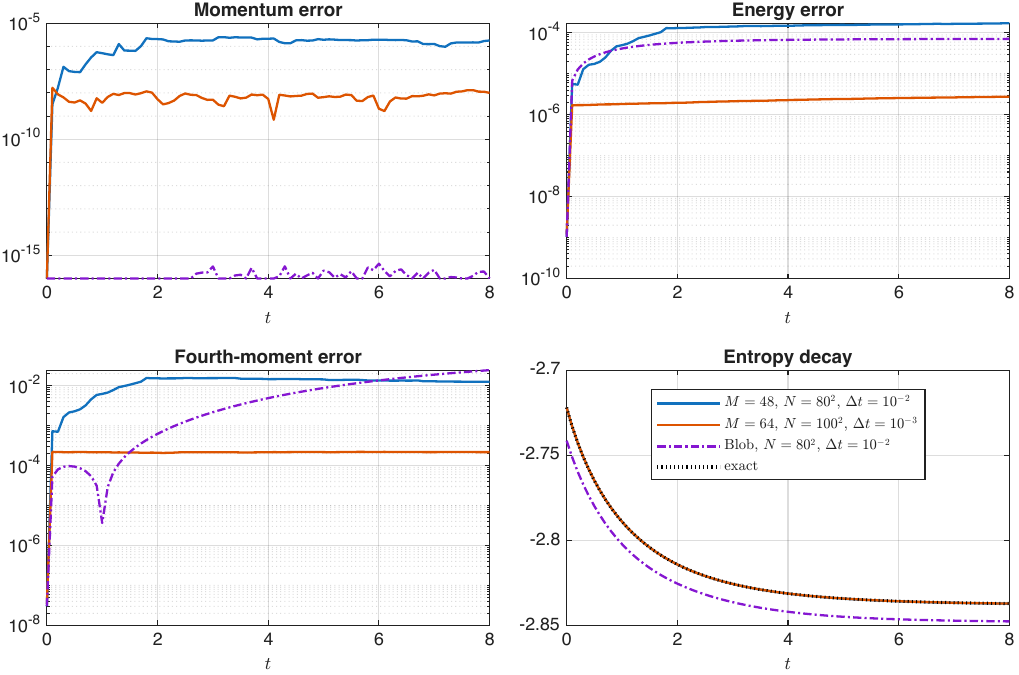}
\caption{
BKW test. Error in tracking macroscopic quantities $\phi = v,|v|^2, |v|^4$ (momentum, energy and fourth moment) and entropy decay against the exact solution. We consider two different parameter settings and include the blob method with $N = 80^2$ as a baseline. 
}

\label{fig:conservation}
\end{figure}

\subsubsection*{Conserved quantities and entropy decay}

Next, we verify the error in the computation of macroscopic quantities. We recall that momentum and energy are conserved by the Landau equation, while the log-entropy decays as the distribution approaches the Maxwellian equilibrium. We also check the error in the tracking of the fourth moment $\int |v|^4 f$. Moments are computed directly via the empirical distribution $f^N$, while the entropy is computed via the Fourier reconstructed density $f^N_M$.
 Results are shown in Figure \ref{fig:conservation} for two different settings: $M = 48, N = 80^2, \Delta t  = 10^{-2}$ and $M = 64, N = 100^2, \Delta t  = 10^{-3}$. For both, RK4 time integration is used. We also include the blob method with $N = 80^2$ for reference.

 The high accuracy of the proposed spectral particle method transfers also into high-accuracy tracking of macroscopic quantities. The blob method outperforms the spectral particle method in tracking the momentum, as its preservation is exact even at the fully discrete particle level. We note that time integration variants conserving energy and entropy decay at the fully discrete level have been proposed in \cite{hu2025fully}.

\subsubsection*{Effect of filtering}

We have noticed how the method's accuracy decreases if the number of particles $N$ is not sufficiently high with respect to the modes $M$. This effect is a consequence of the fact of directly reconstructing the density from the Fourier truncated expansion of $f^N$, and can be addressed by spectral filtering. We perform experiments where we apply a Gaussian filter to the reconstructed density $\hat f ^N_M$, which is given by
\[
(\hat \psi)_k = \exp\left(- \frac{\pi^2 \ve }{2L^2} |k|^2\right)\,,
\]
with different values of the parameter $\ve \in \{ 0.1, 0.3,0.64\}\cdot h^2$.
Figure \ref{fig:filter} shows the evolution of the relative $L^2$ error over the time window $[0,8]$ with $M = 48$  and $M = 128$. The number of particles is set to $N = 80^2 = 6.4\cdot 10^{3} $ in both cases, so that for $M = 128$ there is a relatively low number of particles compared to the number of modes, and the unfiltered reconstruction is not stable. Adding spectral filtering with $\ve > 0.1\cdot h^2$ makes the method stable at the cost of significantly deteriorating its accuracy.
When $\ve = 0.64\cdot h^2$, as in the blob method, the two algorithms return a solution of comparable accuracy. This is to be expected, since applying a  Gaussian filter corresponds to a convolution with a Gaussian kernel in physical space.  We also note how, in Figure \ref{fig:filter-M48}, the high spectral accuracy is lost during the first steps of the computation, but then it stabilizes to an error of order $10^{-4}$. A similar behavior is also observed in Figure \ref{fig:comparisonMvsN_time} where $M = 64$. Here, initialization is done with $v_{\max} = 5$ and Euler integration with $\Delta t  = 10^{-2}$ is used.

\begin{figure}
    \centering
    
    \begin{subfigure}{\linewidth}
        \centering
        \includegraphics[width=\linewidth]{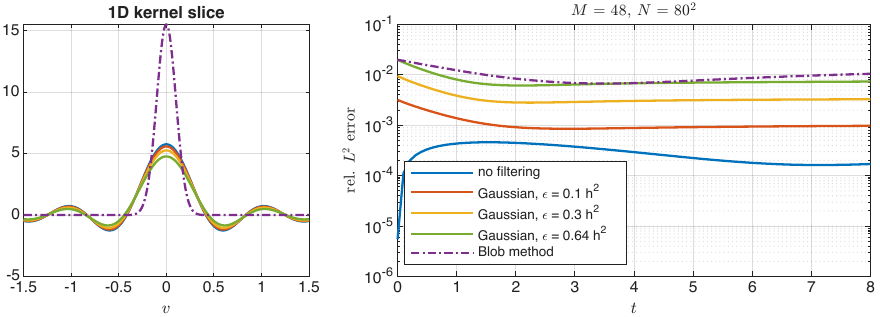}
        \caption{$M=48$}
        \label{fig:filter-M48}
    \end{subfigure}

    \medskip

    \begin{subfigure}{\linewidth}
        \centering
        \includegraphics[width=\linewidth]{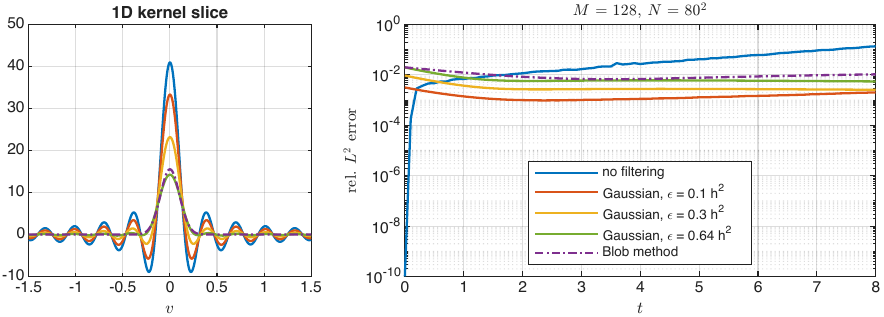}
        \caption{$M=128$}
        \label{fig:filter-M128}
    \end{subfigure}

    \caption{BKW test.
        Effect of spectral filtering with $M = 48, 128$ modes per dimension and $N = 80^2$ particles.  While the initial high spectral accuracy is lost at $t>0$, the unfiltered Fourier density reconstruction $f_M^N$ is stable with $M = 48$, but unstable with $M = 128$. Applying a Gaussian spectral filter mitigates this effect at the expense of accuracy. With high filtering, the error is comparable to the blob method. Here $v_{\max}=5$ and Euler integration with
        $\Delta t=10^{-2}$ is used. We also plot the corresponding 1D slice of the particle kernels $\psi$ for the reconstructed density~\eqref{eq:fNve}. In the case of the unfiltered method, this corresponds to the trigonometric polynomial in \eqref{eq:fNM}.
    }
    \label{fig:filter}
\end{figure}

\subsection{Trubnikov test}
\label{sec:trubnikov}

While in the BKW test the distribution is radially symmetric, in the Trubnikov test \cite{trubnikov1965particle} the initial particle distribution is anisotropic. The experiment verifies whether the method is able to reproduce the exact transfer of energy from one coordinate to the other. We consider the settings of \cite[Test 5]{bailo2025uncertainty} with the only difference of using a time-rescaling constant $C = 1/16$ instead of $C = 1/2$, so that the evolution is analogous to the BKW experiment.
We set the initial temperature in the $v_1$-direction to be $T_1(0)  = 0.75$ and in the $v_2$-direction to be $T_2(0) = 0.5$ so that $\Delta T(0) = T_1(0)  - T_2(0) = 0.25$. Since the equilibrium is given by the Maxwellian distribution which is an isotropic Gaussian, we have $\Delta T(t) \to 0$ as $t \to \infty$ with $T_1(\infty) =T_2(\infty) =  (T_1(0) + T_2(0))/2 = 0.625$. Moreover, for Maxwellian molecules in 2D, with $C = 1/16$ and unit mass, the decay is known exactly 
\begin{equation} \label{eq:trubnikov}
\Delta T(t) = \Delta T(0) e^{-t/2}\,.
\end{equation}
See \cite[Appendix A]{bailo2025uncertainty} for a derivation of Trubnikov’s formula in these settings.

Figure \ref{fig:trubnikov} illustrates the evolution of $\Delta T(t)/\Delta T(0)$ for different combinations of $N$, $M$ parameters over the time window $[0,8]$, and a comparison with the baseline blob method. The spectral particle method is able to simulate the relaxation of the anisotropy with high accuracy, with a final error of order $10^{-8}$ with $N = 100^2 = 10^4$ particles, $M = 64$ modes per dimension, and $\Delta t  = 10^{-3}$.

\begin{figure}
\includegraphics[width = \linewidth]{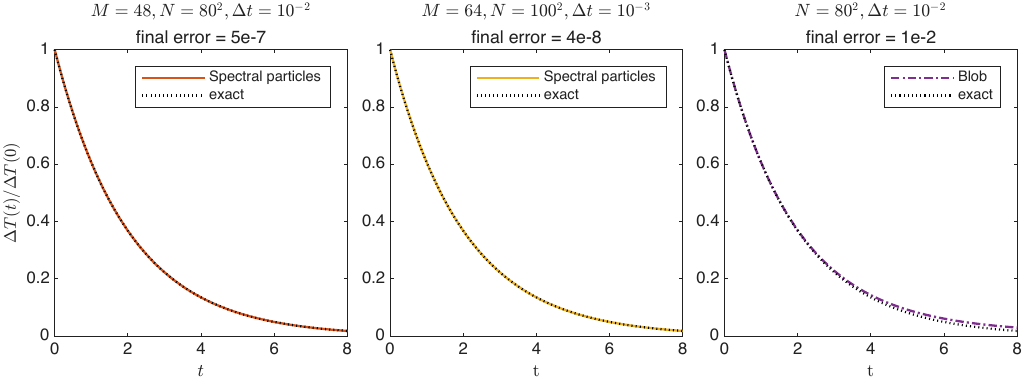}
\caption{Trubnikov test. Relaxation toward equilibrium according to the exact formula. The initial temperature anisotropy $\Delta T(0) = T_1(0) - T_2(0)$ following the exact decay \eqref{eq:trubnikov}. The spectral particle method correctly tracks the decay with increasing accuracy as $N$, $M$ increase. On the right, the blob method is included as a baseline. Particles are initialized on a grid over $[-5,5]^2$ and RK4 time integration is used for the spectral particle method.}
\label{fig:trubnikov}
\end{figure}

\subsection{Bump-on-tail test}
\label{sec:bump}

In this experiment, the starting distribution is given by a Maxwellian state $\mathcal{M}(v;m_c = (0,0),T_c = 1)$ with an additional sharp bump $\mathcal{M}(v; m_b, T_b)$ on its tail which accounts for 10\% of the total mass.
The bump is displaced in the $v_1$-direction with mean velocity $m_b = (3,0)$ and variance $T_b = 0.1$, similar to the settings in \cite{caflisch2008hybrid,MedagliaPareschiZanella2024}. The starting distribution is therefore given by
\begin{equation} \label{eq:bump}
f(0,v) =  0.9 \mathcal{M}(v;(0,0),1)  + 0.1 \mathcal{M}(v;m_b,T_b)\,.
\end{equation}
The total momentum of the system is given by $m = (0.3,0)$ while the energy is given by
$
E = 0.9\cdot (2T_c) + 0.1\cdot (|m_b|^2 + 2 T_b) = 2.72.
$
Therefore, the equilibrium corresponds to the Maxwellian distribution with $m = (0.3,0)$ and $T = (E - |m|^2)/2 = 1.315$. 

Due to the narrow bump, we initialize the particles over a larger domain $[-6,6]^2$ and employ $140$ particles per dimension ($N = 1.96 \cdot 10^4$). Modes are set to $M = 64$. Figure \ref{fig:bump:2d} shows the 2D-distribution at different snapshots $t = 0,1,3,8$, while Figure \ref{fig:bump:1d} shows the $v_1$-marginal distribution against the equilibrium.
The spectral particle method is able to accurately describe the relaxation of the initial distribution
towards the Maxwellian equilibrium, and, in particular, the progressive absorption
of the localized bump into the main bulk.

\begin{figure}
    \centering

    \begin{subfigure}{\linewidth}
        \centering
        \includegraphics[width=\linewidth]{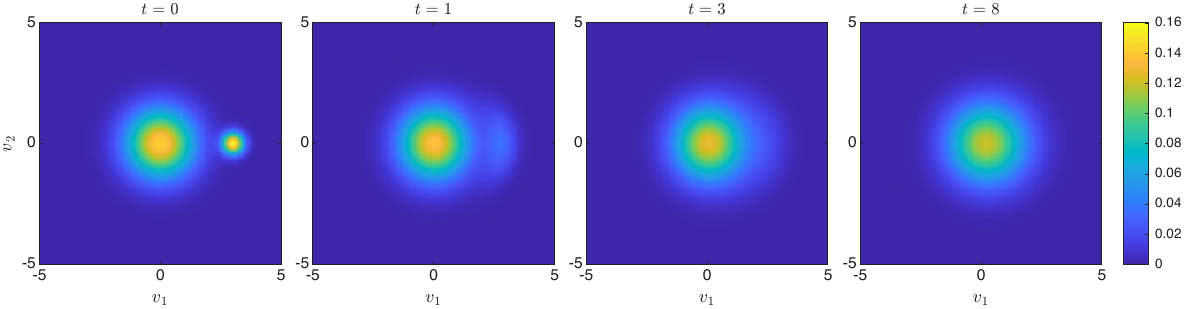}
        \caption{$f(t,v_1,v_2)$}
        \label{fig:bump:2d}
    \end{subfigure}

    \medskip

    \begin{subfigure}{\linewidth}
        \centering
        \includegraphics[width=\linewidth]{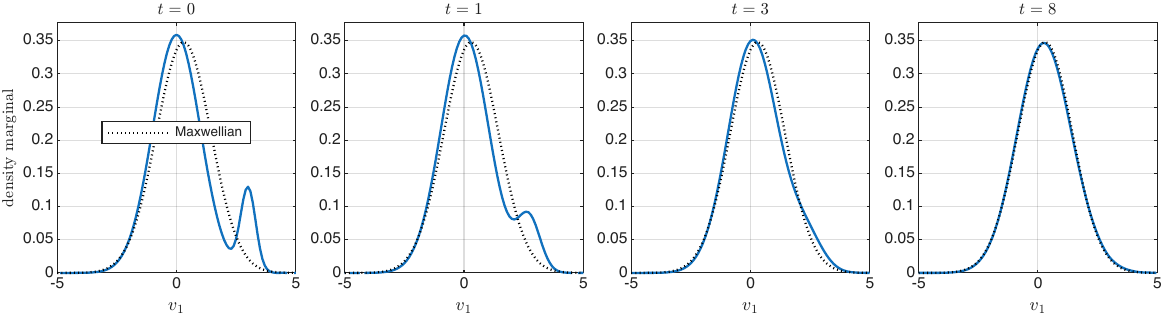}
        \caption{$\int f(t,v_1,v_2)dv_2$}
        \label{fig:bump:1d}
    \end{subfigure}

    \caption{Bump-on-tail test, where the initial distribution \eqref{eq:bump} includes a localized bump of mass on its tail. The method correctly transports and diffuses the bump into the Maxwellian equilibrium. To accurately approximate the initial data, we use $N = 140^2$ particles with $M = 64$ modes per dimension. We set $\Delta t = 10^{-3}$ and RK4 time integration.   
        }
    \label{fig:bump}
\end{figure}

\section{Final remarks and outlook}
We have introduced a deterministic spectral particle method for the
spatially homogeneous Landau equation. Fourier evaluation of the
collision flux, combined with nonuniform fast Fourier transforms
between particles and Fourier modes, yields a cost of
$\mathcal{O}(N+M^d\log M)$ per time step, where $N$ is the number of particles and $M$ is the number of Fourier modes per dimension. This provides an efficient alternative to direct
pairwise evaluation of the particle interactions at an $\mathcal{O}(N^2)$
cost.

For arbitrary interaction potentials with $-d-1\leq \gamma \leq 1$, we derived a consistency estimate that separates the error due to the spectral approximation of the flux from the error introduced by the particle-density reconstruction. The analysis indicates that, for sufficiently smooth reconstructed densities, the Fourier truncation error rapidly becomes negligible compared with the reconstruction error. Numerical experiments on 2D benchmark tests confirm this behavior and show that the method achieves high accuracy, while exhibiting at most linear computational scaling with respect to the number of particles $N$ at fixed spectral resolution.
Total particle mass is preserved by construction, whereas momentum,
energy and entropy are assessed through numerical simulations.

Several aspects remain to be investigated. A complete convergence analysis should include the evolution of the particle trajectories and the effect of the Fourier reconstruction in regions where the density becomes small. Further developments will also address variants of the algorithm which  conserve momentum and energy during the evolution, not only with high accuracy, but exactly. Numerical applications to 3D soft-potential and Coulomb interactions are also left for future work. A particularly relevant direction is the coupling of the proposed spectral particle method with particle-in-cell (PIC) and particle-in-Fourier (PIF) methods for space-inhomogeneous Vlasov--Poisson--Landau and Vlasov--Maxwell--Landau systems \cite{lehe2016pic,ameres2021particle,mitchell2019efficient,sonnen2024variational}. 

\subsection*{Acknowledgments}
The work of Lorenzo Pareschi and Giacomo Borghi was supported by the Royal Society under the Wolfson Fellowship “Uncertainty quantification, data-driven simulations and learning of multiscale complex systems governed by PDEs". Lorenzo Pareschi also acknowledges the partial support of the FIS2023-01334 Advanced Grant “Tackling complexity: advanced numerical approaches for multiscale systems with uncertainties" (ADAMUS). Giacomo Borghi acknowledges the support of the Munich Center for Machine Learning and the ERC
Advanced Grant NEITALG, grant agreement No. 101198055, funded by the European Union.

\bibliographystyle{abbrv}
\bibliography{bibfile}

\end{document}